\title{Non-integrability of  the   Armbruster-Guckenheimer-Kim quartic Hamiltonian  through Morales-Ramis theory}
\author{P. Acosta-Hum\'anez, 
\thanks{School of Basic and Biomedical Sciences, Universidad Sim\'on Bol\'{\i}var,
Barranquilla - Colombia (\email{primitivo.acosta@unisimonbolivar.edu.co;\,\, primi@intelectual.co})}
  \and
M. Alvarez-Ram\'{\i}rez, \thanks{Departamento de Matem\'aticas, UAM-Iztapalapa, San Rafael Atlixco 186, Col. Vicentina, 09340 Iztapalapa, M\'exico city, M\'exico. (\email{mar@xanum.uam.mx})}
  \and
T. J. Stuchi, \thanks{Instituto de F\'{\i}sica - Universidade Federal do Rio de Janeiro, Rio de Janeiro, RJ CP: 68528 - CEP: 21941-972, Brazil.
(\email{tstuchi@if.ufrj.br})}}
\begin{document}

\maketitle

\begin{abstract}
We show the non-integrability of the three-parameter   Armburster-Guckenheimer-Kim quartic Hamiltonian using 
Morales-Ramis theory, with the exception of the three already known integrable cases.  We use
Poincar\'e sections to illustrate the breakdown of regular motion for some parameter values. 
\end{abstract}

\begin{keywords}
Hamiltonian, integrability of dynamical systems, differential Galois theory, Legendre equation,  Schr\"odinger equation.
\end{keywords}

\begin{AMS}
37J30, 70H06, 12H05
\end{AMS}

\section{Introduction}
Mechanical and physical systems whose energy is conserved are  usually modelled by Hamiltonian systems with $n$ degrees of freedom.  
These systems have  a Hamiltonian function $H$ as  a first integral and we say that the Hamiltonian system is integrable in the Liouville sense (often referred to as complete integrability) if there exist $n-1$ 
additional smooth first integrals in involution  $\{I_j,H\}=0$, $j=1,\dots ,n-1$, which are functionally independent, see \cite{am}.

There are many physics and mathematical papers  with examples of numerical solutions of Hamiltonian systems with two or more degrees freedom 
 giving evidence of non-integrability. In the present paper,  using  Morales-Ramis theory \cite{morales} and some new contributions to this ,  we examine rigorously 
 the non-integrability  of the Hamiltonian of two degrees of freedom with Armbruster-Guckenheimer-Kim potential given by
\begin{equation}\label{ham-agk}
H(x,y)= \frac{1}{2}(p_x^2+p_y^2) -\frac{\mu}{2}(x^2+y^2)-\frac{a}{4} (x^2+y^2)^2- \frac{b}{2} x^2y^2.
\end{equation}
where $\mu$, $a$ and  $b$ are real parameters. 

In 1989 Simonelli and Gollub \cite{sg}  carried out experimental work with
surface waves in a square and rectangular containers subject to vertical oscillations. Their methods allowed measurements of both stable and unstable fixed points (sinks, sources, and saddles), and the nature of the bifurcation sequences  clearly established and reported. 
 Later,  motived by these experiments reported in \cite{sg},  Armbruster et. al. \cite{agk} derived a dynamical model starting with a normal form given by 
 four first order differential equations depending on several parameters. Their system provides 
a general description of the codimension two bifurcation problem.
 To restrict the model they performed a  rescaling of the variables and parameters.  
When one of the scaling parameters $\varepsilon = 0$, the system of  equations becomes Hamiltonian with
the Hamiltonian function giving by (\ref{ham-agk}) which has $D_4$-symmetry.
Armbruster et. al \cite{agk}  found  a {\em large} parameter region of chaotic behaviour.
They argue that the existence of chaos in a given region suggests that  the Hamiltonian (\ref{ham-agk}) is  non-integrable.

As early as 1966  a  similar potential  was proposed  by Andrle \cite{andrle} 
as a dynamical model for a stellar system with an axis and a plane of symmetry given by 
$$V=\frac{1}{2} (C^2{\bar\omega}^{-2}-A^2\xi^2-B^2z^2+\frac{1}{2}a\xi^4+\frac{1}{2}\gamma z^4+2\beta\xi^2z^2)$$
where $\xi = {\bar\omega}- {\bar\omega}_0$, $A$, $B$, $\alpha,\gamma$ are positive constant, $\beta$ is a constant and
$C$ is the value of the angular momentum integral.
This potential  has often been used in the study of galactic dynamics, see  among others \cite{contra}.

In particular,  if  $\mu=-1$  in  (\ref{ham-agk}) we have the Hamiltonian studied  by  Llibre and Roberto \cite{llibre}. These authors examined  its ${\mathcal C}^1$ non-integrability in the sense of
Liouville-Arnold, see \cite{am}. More recently,  Elmandouh \cite{elmandouh} studied the dynamics of 
rotation of a nearly axisymmetric galaxy rotating 
with a constant angular velocity  around a fixed axis
using (\ref{ham-agk})  in a rotating reference frame. He also examined the  non-integrability by means of
 Painlev\'e analysis.
 
The goal of this present paper is to show  that the Hamiltonian (\ref{ham-agk}) is, in fact, non-integrable except for the cases $b=0$, $a=-b$ and $b=2a$ mentioned above.
The potential of the Hamiltonian (\ref{ham-agk}) can be written in the form of the sum of two  homogeneous components 
$$
V(x,y)= -\frac{\mu}{2}(x^2+y^2)-\frac{a}{4} (x^2+y^2)^2- \frac{b}{2} x^2y^2 =V_{min} + V_{max},
$$
where $V_{min}$ and  $V_{max}$ are quadratic and quartic polynomial, respectively.

Maciejewski and Przybylska \cite{maria}  and Bostan et. al. \cite{combot}  have made advances on the Morales-Ramis Theorem for homogeneous potentials. In particular \cite{combot} has found a way of applying Morales-Ramis Theorem for homogeneous potentials in a very efficient way.  
For all Darboux points, Bostan et. al. \cite{combot} implemented an algorithm that allows to determinate 
necessary integrability conditions  of homogeneous  potentials up to degree 9.
We use this  algorithm to obtain necessary integrability conditions for the quartic potential.
Then using an old result of Hietarinta \cite{hietarinta} and  Yoshida \cite{yoshida} for polynomial potentials   the necessary integrability conditions  for AGK potential is obtained.
 The above necessary conditions imply  integrability by rational functions. The general Morales-Ramis 
 Theorem (see \cite{morales}) complements the study by given necessary conditions for rational or meromorphic integrability, depending on the kind of singularity of the normal variational  equation.
 
The structure of the present paper is as follows. In Section \ref{sec1}, we briefly review some preliminary
notions and results of Morales-Ramis theory to be used later. The main results will be given in
Section \ref{resultados}.  
In Section \ref{sec-poincare} we use Poincar\'e sections to exhibit the breakdown in regular motion suggesting non-integrability.

\section{Preliminars}\label{sec1}
In this section we describe  some basic facts concerning the Morales-Ramis theory. 

\subsection{Differential Galois Theory}
Picard-Vessiot theory is the Galois theory of linear differential
equations, also known as differential Galois theory. We present here some of its main definitions and results, and  refer the reader to~\cite{vasi} for a wide theoretical background.

We start by recalling some basic notions on algebraic groups and, afterwards, Picard-Vessiot theory is introduced.
    
An algebraic group of matrices $2\times 2$ is a subgroup $G\subset\mathrm{GL}(2,\mathbb{C})$
defined by means of algebraic equations in its matrix elements and in the inverse of its determinant. 
    It is well known that any algebraic group $G$ has a unique connected normal
    algebraic subgroup $G^0$ of finite index. In particular, the
    \emph{identity connected component $G^0$} of $G$ is defined as the largest connected
    algebraic subgroup of $G$ containing the identity. In the case that $G=G^0$ we say that $G$ is a
    \emph{connected group}. Moreover, if $G^0$ is abelian we say that $G$ is \emph{virtually abelian}.

Now, we briefly introduce Picard-Vessiot Theory.
    
    First, we say that $\left( \mathbb{K}, \phantom{i}' \ \right)$ - or, simply,
    $\mathbb{K}$ - is a \emph{differential field} if $\mathbb{K}$ is a commutative field
    of characteristic zero, depending on $x$ and $\phantom{i}'= d/dx$ is a
    derivation on $\mathbb{K}$ (that is, satisfying $(a+b)'=a'+b'$
    and $(a\cdot b)'=a'\cdot b+a \cdot b'$ for all $a,b \in \mathbb{K}$). We
    denote by $\mathcal{C}$ the \emph{field of constants of $\mathbb{K}$},
    defined as $\mathcal{C}=\left\{ c \in \mathbb{K} \ | \ c'=0
    \right\}$.
    
    We will deal with second order linear homogeneous differential
    equations, that is, equations of the form
    \begin{equation}
    \label{soldeq}
    y''+b_1y'+b_0y=0,\quad b_1,b_0\in \mathbb{K},
    \end{equation}
    and we will be concerned with the algebraic structure of their solutions. Moreover, along this work, we will refer
    to the current differential field as the smallest one containing the field of coefficients of this differential equation.
    
    Let us suppose that $y_1, y_2$ is a basis of solutions of equation
    \eqref{soldeq}, i.e., $y_1, y_2$ are linearly independent over $\mathcal{C}$
    and every solution is a linear combination over $\mathcal{C}$ of
    these two solutions. Let $\mathbb{L}= \mathbb{K}\langle y_1, y_2 \rangle =\mathbb{K}(y_1, y_2, y_1',
    y'_2)$ be the differential extension of $\mathbb{K}$ such that $\mathcal{C}$
    is the field of constants for $\mathbb{K}$ and $\mathbb{L}$. In this terms, we say
    that $\mathbb{L}$, the smallest differential field containing $\mathbb{K}$ and
    $\{y_{1},y_{2}\}$, is the \textit{Picard-Vessiot extension} of $\mathbb{K}$
    for the differential equation~\eqref{soldeq}.
    
    The group of all the differential automorphisms of $\mathbb{L}$ over $\mathbb{K}$
    that commute with the derivation $\phantom{i}'$  is called the
    \emph{differential Galois group} of $\mathbb{L}$ over $\mathbb{K}$ and is denoted by ${\rm
         DGal}(\mathbb{L}/\mathbb{K})$. This means, in particular, that for any $\sigma\in
    \mathrm{DGal}(\mathbb{L}/\mathbb{K})$, $\sigma(a')=(\sigma(a))'$ for all $a\in \mathbb{L}$
    and that $\sigma(a)=a$ for all $a\in \mathbb{K}$. Thus, if $\{y_1,y_2\}$ is
    a fundamental system of solutions of~\eqref{soldeq} and $\sigma \in
    \mathrm{DGal}(\mathbb{L}/\mathbb{K})$ then $\{\sigma y_1, \sigma y_2\}$ is also a
    fundamental system. This implies the existence of a non-singular
    constant matrix
    \[
    A_\sigma=
    \begin{pmatrix}
    a & b\\
    c & d
    \end{pmatrix}
    \in \mathrm{GL}(2,\mathbb{C}),
    \]
    such that
    \[
    \sigma
    \begin{pmatrix}
    y_{1}&
    y_{2}
    \end{pmatrix}
    =
    \begin{pmatrix}
    \sigma (y_{1})&
    \sigma (y_{2})
    \end{pmatrix}
    =\begin{pmatrix} y_{1}& y_{2}
    \end{pmatrix}A_\sigma.
    \]
    This fact can be extended in a natural way to a system
    \[
    \sigma
    \begin{pmatrix}
    y_{1}&y_2\\
    y'_1&y'_{2}
    \end{pmatrix}
    =
    \begin{pmatrix}
    \sigma (y_{1})&\sigma (y_2)\\
    \sigma (y'_1)&\sigma (y'_{2})
    \end{pmatrix}
    =\begin{pmatrix} y_{1}& y_{2}\\y'_1&y'_2
    \end{pmatrix}A_\sigma,
    \]
    which leads to a faithful representation $\mathrm{DGal}(\mathbb{L}/\mathbb{K})\to
    \mathrm{GL}(2,\mathbb{C})$ and makes possible to consider
    $\mathrm{DGal}(\mathbb{L}/\mathbb{K})$ isomorphic to a subgroup of $\mathrm{GL}(2,\mathbb{C})$
    depending (up to conjugacy) on the choice of the fundamental system $\{y_1,y_2\}$.
    
    One of the fundamental results of the Picard-Vessiot theory is the
    following theorem (see~\cite{ka,kol}).
    
    \begin{theorem}  The Galois group $\mathrm{DGal}(\mathbb{L}/\mathbb{K})$ is an
         algebraic subgroup of $\mathrm{GL}(2,\mathbb{C})$.
    \end{theorem}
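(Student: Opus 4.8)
The plan is to realise $\mathrm{DGal}(\mathbb{L}/\mathbb{K})$, through the faithful representation $\sigma\mapsto A_\sigma$ described above, as the stabiliser of a single prime ideal under a natural action of the whole group $\mathrm{GL}(2,\mathbb{C})$, and then to invoke the general fact that such a stabiliser is Zariski-closed.

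First I would set up the coordinate ring. Writing $X=\begin{pmatrix} y_1 & y_2\\ y_1' & y_2'\end{pmatrix}$, equation~(\ref{soldeq}) becomes $X'=BX$ with $B=\begin{pmatrix} 0 & 1\\ -b_0 & -b_1\end{pmatrix}\in\mathrm{M}_2(\mathbb{K})$, and $W=\det X$ satisfies $W'=-b_1W$. Hence $R:=\mathbb{K}[y_1,y_2,y_1',y_2',W^{-1}]$ is a differential subring of $\mathbb{L}$ with $\mathbb{L}=\mathrm{Frac}(R)$, and by the construction of $\mathbb{L}$ its field of constants is still $\mathcal{C}$. Introducing indeterminates $(T_{ij})_{1\le i,j\le 2}$, I would make $\mathbb{K}[T_{ij},\det(T_{ij})^{-1}]$ into a differential ring by imposing $(T_{ij})'=B(T_{ij})$; then the $\mathbb{K}$-algebra map $\phi\colon\mathbb{K}[T_{ij},\det^{-1}]\to R$ with $(T_{ij})\mapsto X$ is a surjective differential homomorphism whose kernel $\mathfrak{p}$ is a prime differential ideal, so $R\cong\mathbb{K}[T_{ij},\det^{-1}]/\mathfrak{p}$. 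The group $\mathrm{GL}(2,\mathbb{C})$ acts on $\mathbb{K}[T_{ij},\det^{-1}]$ by right translation $\rho_A\colon(T_{ij})\mapsto(T_{ij})A$; since $A$ is a constant matrix, $\bigl((T_{ij})A\bigr)'=B\bigl((T_{ij})A\bigr)$, so each $\rho_A$ is a differential $\mathbb{K}$-algebra automorphism.

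Next I would prove the identification $\mathrm{DGal}(\mathbb{L}/\mathbb{K})=\{A\in\mathrm{GL}(2,\mathbb{C})\colon\rho_A(\mathfrak{p})=\mathfrak{p}\}$. For $\sigma\in\mathrm{DGal}(\mathbb{L}/\mathbb{K})$ a one-line check on generators gives $\phi\circ\rho_{A_\sigma}=\sigma\circ\phi$, since $\phi(\rho_{A_\sigma}T_{1j})=\sum_k y_k(A_\sigma)_{kj}=\sigma(y_j)$ and similarly on the second row and on $\det^{-1}$; comparing kernels and using that $\sigma$ is injective gives $\rho_{A_\sigma}(\mathfrak{p})=\mathfrak{p}$. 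Conversely, if $\rho_A(\mathfrak{p})=\mathfrak{p}$ then $\rho_A$ descends to a differential $\mathbb{K}$-algebra automorphism of $R$, which extends uniquely to a differential $\mathbb{K}$-automorphism of $\mathbb{L}=\mathrm{Frac}(R)$; since $\mathcal{C}\subseteq\mathbb{K}$ this automorphism lies in $\mathrm{DGal}(\mathbb{L}/\mathbb{K})$, and by construction its associated matrix is exactly $A$. Combined with the faithfulness of $\sigma\mapsto A_\sigma$ noted above, this shows that $\sigma\mapsto A_\sigma$ is a group isomorphism of $\mathrm{DGal}(\mathbb{L}/\mathbb{K})$ onto $S:=\{A\in\mathrm{GL}(2,\mathbb{C})\colon\rho_A(\mathfrak{p})=\mathfrak{p}\}$. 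It is here that the Picard--Vessiot hypotheses --- namely $\mathbb{L}=\mathrm{Frac}(R)$ and the absence of new constants --- are essential.

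It remains to show that $S$ is Zariski-closed in $\mathrm{GL}(2,\mathbb{C})$, and I expect this to be the crux, since a priori ``$\rho_A(\mathfrak{p})=\mathfrak{p}$'' encodes infinitely many conditions. The key point is that the right-translation action of $\mathrm{GL}(2,\mathbb{C})$ on $\mathbb{K}[T_{ij},\det^{-1}]$ is locally finite: every element lies in a finite-dimensional $\mathbb{K}$-subspace stable under all $\rho_A$ (the entries of $(T_{ij})$ and those of $\det^{-1}\cdot(T_{ij})$ span such subspaces, and products are absorbed into symmetric powers). Since $\mathbb{K}[T_{ij},\det^{-1}]$ is Noetherian, I would pick finitely many generators of $\mathfrak{p}$ and a finite-dimensional stable subspace $V$ containing them; then $V\cap\mathfrak{p}$ generates $\mathfrak{p}$, and $\rho_A(\mathfrak{p})=\mathfrak{p}$ if and only if $\rho_A(V\cap\mathfrak{p})\subseteq V\cap\mathfrak{p}$ (for an invertible operator preserving $V$, this inclusion forces equality by a dimension count). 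Choosing a basis of $V$ adapted to $V\cap\mathfrak{p}$, the map sending $A$ to the matrix of $\rho_A|_V$ is a morphism $\mathrm{GL}(2,\mathbb{C})\to\mathrm{GL}(V)$, and $\rho_A(V\cap\mathfrak{p})\subseteq V\cap\mathfrak{p}$ amounts to the vanishing of the corresponding off-diagonal block, i.e.\ to finitely many polynomial equations in the entries of $A$ and in $\det(A)^{-1}$. Therefore $S=\mathrm{DGal}(\mathbb{L}/\mathbb{K})$ is an algebraic subgroup of $\mathrm{GL}(2,\mathbb{C})$.
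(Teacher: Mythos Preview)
Your argument is correct and is essentially the standard proof that the differential Galois group of a Picard--Vessiot extension is a linear algebraic group: present $R=\mathbb{K}[y_1,y_2,y_1',y_2',W^{-1}]$ as a quotient of the universal solution ring $\mathbb{K}[T_{ij},\det^{-1}]$ by a prime differential ideal $\mathfrak{p}$, identify $\mathrm{DGal}(\mathbb{L}/\mathbb{K})$ with the stabiliser of $\mathfrak{p}$ under the right-translation action of $\mathrm{GL}(2,\mathbb{C})$, and then exploit local finiteness of this action together with Noetherianity to reduce ``$\rho_A(\mathfrak{p})=\mathfrak{p}$'' to finitely many polynomial conditions. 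Each step is sound; in particular your verification that $\rho_A(\mathfrak{p})=\mathfrak{p}$ is equivalent to $\rho_A(V\cap\mathfrak{p})\subseteq V\cap\mathfrak{p}$ is clean, and the local finiteness claim for $\mathbb{K}[T_{ij},\det^{-1}]$ (including the $\det^{-1}$ factor) is handled correctly.

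There is nothing to compare with in the paper itself: the theorem is stated there as a classical result of Picard--Vessiot theory and is not proved, the authors simply referring the reader to Kaplansky and Kolchin. Your write-up is precisely the argument one finds, in varying levels of detail, in those sources and in standard modern treatments such as van der Put--Singer or Magid; it is not an alternative route but the canonical one.
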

    
We say that Eq. \eqref{soldeq} is \emph{integrable} if the Picard-Vessiot extension
    $\mathbb{L}\supset \mathbb{K}$ is obtained as a tower of differential fields
    $\mathbb{K}=\mathbb{L}_0\subset \mathbb{L}_1\subset\cdots\subset \mathbb{L}_m=\mathbb{L}$ such that
    $\mathbb{L}_i=\mathbb{L}_{i-1}(\eta)$ for $i=1,\ldots,m$, where either
    \begin{itemize}
         \item[$(i)$] $\eta$ is {\emph{algebraic}} over $\mathbb{L}_{i-1}$; that is, $\eta$ satisfies a
         polynomial equation with coefficients in $\mathbb{L}_{i-1}$.
         \item[$(ii)$] $\eta$ is {\emph{primitive}} over $\mathbb{L}_{i-1}$; that is, $\eta' \in \mathbb{L}_{i-1}$.
         \item[$(iii)$] $\eta$ is {\emph{exponential}} over $\mathbb{L}_{i-1}$; that is, $\eta' /\eta \in \mathbb{L}_{i-1}$.
    \end{itemize}

If $\eta$     is obtained as a  combination of (i), (ii) and (iii), we say that $\eta$ is \emph{ Liouvillan}.
    Usually in terms of  Differential Algebra's terminology we say that
    Eq. \eqref{soldeq} is integrable if the corresponding
    Picard-Vessiot extension is Liouvillian. Moreover, the
    following theorem  holds.
    
    \begin{theorem}[Kolchin]
         \label{LK}
         Equation~\eqref{soldeq} is integrable if
         and only if $\mathrm{DGal}(\mathbb{L}/\mathbb{K})$ is virtually solvable, that is, its identity component
         $(\mathrm{DGal}(\mathbb{L}/\mathbb{K}))^0$ is solvable.
    \end{theorem}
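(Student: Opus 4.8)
The plan is to prove this as the differential-Galois analogue of the classical theorem that a polynomial equation is solvable by radicals exactly when its Galois group is solvable, with ``radicals'' replaced by the three elementary operations $(i)$--$(iii)$. Two preliminary facts are needed. The first is the differential Galois correspondence: writing $G=\mathrm{DGal}(\mathbb{L}/\mathbb{K})$, the assignment $\mathbb{F}\mapsto\mathrm{DGal}(\mathbb{L}/\mathbb{F})$ is an inclusion-reversing bijection between the intermediate differential fields $\mathbb{K}\subseteq\mathbb{F}\subseteq\mathbb{L}$ and the Zariski-closed subgroups of $G$, under which $\mathbb{F}/\mathbb{K}$ is itself a Picard--Vessiot extension if and only if the corresponding subgroup is normal, and then $\mathrm{DGal}(\mathbb{F}/\mathbb{K})\cong G/\mathrm{DGal}(\mathbb{L}/\mathbb{F})$. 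The second is the identification of the Galois group of each elementary step: an algebraic extension contributes a finite group; a primitive element $\eta$ (with $\eta'\in\mathbb{F}$) contributes a closed subgroup of the additive group $(\mathbb{C},+)$ through $\sigma\mapsto\sigma(\eta)-\eta$; and an exponential element $\eta$ (with $\eta'/\eta\in\mathbb{F}$) contributes a closed subgroup of the multiplicative group $(\mathbb{C}^{\times},\cdot)$ through $\sigma\mapsto\sigma(\eta)/\eta$. In every case the group is virtually abelian, hence has solvable identity component.

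For the direct implication, suppose \eqref{soldeq} is integrable, so that $\mathbb{L}$ lies in a tower $\mathbb{K}=\mathbb{L}_0\subset\mathbb{L}_1\subset\cdots\subset\mathbb{L}_m$ with every step of type $(i)$, $(ii)$ or $(iii)$. I would first replace this tower by one in which each $\mathbb{L}_i/\mathbb{K}$ is itself a Picard--Vessiot extension, which is achieved by passing to suitable normal closures and using that the field of constants does not grow along these extensions. Applying the correspondence to the refined tower then produces a chain $G=H_0\supseteq H_1\supseteq\cdots\supseteq H_m=\{e\}$ of closed subgroups, each normal in the preceding one, with each quotient $H_{i-1}/H_i\cong\mathrm{DGal}(\mathbb{L}_i/\mathbb{L}_{i-1})$ virtually abelian. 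Since the class of linear algebraic groups with solvable identity component is closed under extensions, induction down this chain shows that $G^0$ is solvable.

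For the converse, suppose $G^0$ is solvable. Because $G^0$ is normal of finite index in $G$, the field $\mathbb{K}_1:=\mathbb{L}^{G^0}$ is a finite normal extension of $\mathbb{K}$ with $\mathrm{DGal}(\mathbb{L}/\mathbb{K}_1)=G^0$, and in characteristic zero the primitive element theorem gives $\mathbb{K}_1=\mathbb{K}(\eta_1)$ with $\eta_1$ algebraic over $\mathbb{K}$: a step of type $(i)$. Now $\mathrm{DGal}(\mathbb{L}/\mathbb{K}_1)=G^0$ is connected and solvable, so by the Lie--Kolchin theorem it is conjugate in $\mathrm{GL}(2,\mathbb{C})$ to a group of upper-triangular matrices; choose $\{y_1,y_2\}$ to be a fundamental system for which the representation is triangular. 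Then $\sigma(y_1)=\chi(\sigma)\,y_1$ for a character $\chi$ of $G^0$, so $u:=y_1'/y_1$ is fixed by $G^0$ and hence lies in $\mathbb{K}_1$; thus $y_1$ is exponential over $\mathbb{K}_1$, which is a step of type $(iii)$ giving $\mathbb{K}_2:=\mathbb{K}_1(y_1)$. The Wronskian $W=y_1y_2'-y_1'y_2$ satisfies $W'/W=-b_1\in\mathbb{K}\subseteq\mathbb{K}_2$, so $W$ is exponential over $\mathbb{K}_2$: a step of type $(iii)$ giving $\mathbb{K}_3:=\mathbb{K}_2(W)$. Finally $t:=y_2/y_1$ satisfies $t'=W/y_1^2\in\mathbb{K}_3$, so $t$ is primitive over $\mathbb{K}_3$: a step of type $(ii)$ giving $\mathbb{K}_4:=\mathbb{K}_3(t)$. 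Since $y_2=ty_1$, $y_1'=uy_1$ and $y_2'=t'y_1+ty_1'$, all generators of $\mathbb{L}=\mathbb{K}\langle y_1,y_2\rangle$ lie in $\mathbb{K}_4$, while conversely each $\mathbb{K}_i$ is contained in $\mathbb{L}$; hence $\mathbb{K}_4=\mathbb{L}$ and the tower $\mathbb{K}\subset\mathbb{K}_1\subset\mathbb{K}_2\subset\mathbb{K}_3\subset\mathbb{K}_4$ exhibits \eqref{soldeq} as integrable. Degenerate situations, for instance $G^0$ trivial or $y_1'/y_1$ already in $\mathbb{K}_1$, are covered by deleting the redundant stages.

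The step I expect to be the main obstacle is the normalization used in the direct implication: turning an arbitrary Liouvillian tower into one whose successive layers are genuine Picard--Vessiot extensions of the base. Picard--Vessiot extensions do not compose transitively, and one must keep the field of constants under control all the way up, so this requires care. The converse, by contrast, is in essence a corollary of the Lie--Kolchin theorem, which we take as known; once a triangularizing basis of solutions is available, the reduction-of-order identities above yield the exponential and primitive steps directly.
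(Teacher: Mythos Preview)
The paper does not supply its own proof of this statement: Theorem~\ref{LK} is quoted as a classical result of Kolchin, with references to Kaplansky and Kolchin, and is used as a black box in the subsequent non-integrability arguments. There is therefore nothing in the paper to compare your argument against.

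That said, your outline is the standard one and is essentially correct. A few remarks. In the forward direction, your self-identified obstacle is the right one: an arbitrary Liouvillian tower need not consist of Picard--Vessiot layers, and the clean way around this is not to force each $\mathbb{L}_i/\mathbb{K}$ to be Picard--Vessiot, but rather to enlarge the top of the tower to a single Picard--Vessiot extension $\mathbb{M}/\mathbb{K}$ containing $\mathbb{L}$ whose Galois group is solvable-by-finite (this is where one uses that each elementary step embeds in a Picard--Vessiot extension with group inside $\mathbb{G}_a$, $\mathbb{G}_m$, or a finite group, and that constants do not grow); then $\mathrm{DGal}(\mathbb{L}/\mathbb{K})$ is a quotient of $\mathrm{DGal}(\mathbb{M}/\mathbb{K})$ and inherits the property. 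Your inductive closure claim (virtually solvable is closed under extensions of linear algebraic groups) is true but not entirely trivial, since $G^0\cap N$ need not be connected; it goes through once one observes that $G^0/H^0$ is a connected quotient, hence equal to its own identity component.

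For the converse your use of Lie--Kolchin followed by the reduction-of-order identities $u=y_1'/y_1$, $W'=-b_1W$, $t'=W/y_1^2$ is exactly the classical argument and is correct as written. One small point: you should say $\mathbb{L}\subseteq\mathbb{K}_4$ rather than asserting equality, since some of the adjoined elements (for example $W$) could in principle already lie in an earlier field; but containment is all that the definition of integrability requires.
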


Let us notice that for a second-order linear differential
equation  $y'' =r(x)y$
with coefficients in $\mathbb{K}$, 
the only connected non-solvable group is $SL(2,\mathcal{C})$, see \cite{ka,morales,vasi} .

The following result can be found in \cite{almp}.
\begin{theorem}[Acosta-Hum\'anez, Morales-Ruiz, L\'azaro \& Pantazi, \cite{almp}]\label{almp}
         Legendre differential equation
         \begin{equation}\label{eleg}
         (1-x^2){d^2y\over dx^2} -2x{dy\over dx}+\left({\nu(\nu+1)}-{\tilde\mu^2\over 1-x^2}\right)y=0
         \end{equation} is integrable if and only if,
         either
         \begin{enumerate}
              \item $\tilde\mu\pm \nu\in\mathbb{Z}$ or $\nu\in\mathbb{Z}$, or
              \item $\pm \tilde\mu$, $\pm (2\nu+1)$ belong to one of the following seven families
              \[
              \begin{array}{|c|c|c|c|} \hline
              \mbox{Case} &\tilde\mu \in & \nu \in & \tilde\mu + \nu \in \\ \hline
              (a) & \mathbb{Z} +\frac{1}{2}  & \mathbb{C}                          &                 \\[1.2ex]  \hline
              (b) & \mathbb{Z}\pm\frac{1}{3} & \frac{1}{2}\mathbb{Z}\pm\frac{1}{3} & \mathbb{Z}+\frac{1}{6} \\[1.2ex]  \hline
              (c) & \mathbb{Z}\pm\frac{2}{5} & \frac{1}{2}\mathbb{Z}\pm\frac{1}{5} & \mathbb{Z}+\frac{1}{0} \\[1.2ex]  \hline
              (d) & \mathbb{Z}\pm\frac{1}{3} & \frac{1}{2}\mathbb{Z}\pm\frac{2}{5} & \mathbb{Z}+\frac{1}{10} \\[1.2ex]  \hline
              (e) & \mathbb{Z}\pm\frac{1}{5} & \frac{1}{2}\mathbb{Z}\pm\frac{2}{5} & \mathbb{Z}+\frac{1}{10} \\[1.2ex] \hline
              (f) & \mathbb{Z}\pm\frac{2}{5} & \frac{1}{2}\mathbb{Z}\pm\frac{1}{3} & \mathbb{Z}+\frac{1}{6} \\[1.2ex] \hline
              \end{array}       \]
\end{enumerate}
    \end{theorem}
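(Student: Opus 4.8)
The plan is to recognize the associated Legendre equation~\eqref{eleg} as a Gauss hypergeometric equation in disguise and then to apply the Schwarz--Kimura classification of those hypergeometric equations whose differential Galois group is virtually solvable.

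First I would pass to reduced (Schr\"odinger) normal form. Writing \eqref{eleg} as $y''+b_1y'+b_0y=0$ with $b_1=-2x/(1-x^2)$, the substitution $y=(1-x^2)^{-1/2}v$ eliminates the first-order term and produces $v''=r(x)v$ with $r=\tfrac14 b_1^2+\tfrac12 b_1'-b_0\in\mathbb{C}(x)$; since $(1-x^2)^{-1/2}$ is algebraic over $\mathbb{C}(x)$, equation~\eqref{eleg} is integrable in the Picard--Vessiot sense precisely when $v''=rv$ is. The function $r$ has exactly three singular points --- double poles at $x=\pm1$ and a regular singularity at $x=\infty$ --- so $v''=rv$ is projectively equivalent to a Riemann $P$-equation, i.e.\ to the Gauss hypergeometric equation. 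A short indicial computation at each singular point yields the triple of exponent differences $(\tilde\mu,\tilde\mu,2\nu+1)$, the coincidence of the first two reflecting the symmetry $x\mapsto -x$ of~\eqref{eleg}; it is exactly this coincidence that collapses the general Schwarz list into the short table of the statement.

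Next I would invoke Kimura's theorem on solvability by quadratures of Riemann's equation: a hypergeometric equation with exponent-difference triple $(\lambda_1,\lambda_2,\lambda_3)$ has virtually solvable Galois group if and only if either (i) one of $\pm\lambda_1\pm\lambda_2\pm\lambda_3$ is an odd integer, or (ii) the triple, up to sign changes and shifts by integers subject to the parity constraints of Kimura's table, lies on one of the dihedral, tetrahedral, octahedral, or icosahedral rows of the Schwarz list. Setting $\lambda_1=\lambda_2=\tilde\mu$ and $\lambda_3=2\nu+1$, alternative (i) reads: $2\nu+1$ is an odd integer, i.e.\ $\nu\in\mathbb{Z}$; or $\pm 2\tilde\mu+(2\nu+1)$ is an odd integer, i.e.\ $\tilde\mu\pm\nu\in\mathbb{Z}$ --- this is precisely item~1. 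For alternative (ii) one intersects each row of the Schwarz list with the hyperplane $\{\lambda_1=\lambda_2\}$: the dihedral row $(\tfrac12,\tfrac12,\text{ arbitrary})$ forces $\tilde\mu\in\mathbb{Z}+\tfrac12$ with $\nu$ free, which is family~$(a)$, and the tetrahedral/octahedral/icosahedral rows, rewritten through $\lambda_3=2\nu+1$, give the remaining families $(b)$--$(f)$. Conversely, by Theorem~\ref{LK} together with the fact recalled above that the only connected non-solvable subgroup of $\mathrm{SL}(2,\mathbb{C})$ is $\mathrm{SL}(2,\mathbb{C})$ itself, outside these families the Galois group of $v''=rv$ (which lies in $\mathrm{SL}(2,\mathbb{C})$ since $r$ carries no first-order term) equals $\mathrm{SL}(2,\mathbb{C})$, so \eqref{eleg} is not integrable.

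The main obstacle is the combinatorial bookkeeping in alternative (ii). The Schwarz list has fifteen rows, each with its own conventions for representatives modulo $\mathbb{Z}$ and for admissible sign patterns, and one must verify that intersecting with $\{\lambda_1=\lambda_2\}$ leaves exactly the configurations that re-package as $(a)$--$(f)$ --- none missed, none added spuriously --- and then reconcile the overlaps, i.e.\ the parameter values satisfying both (i) and (ii) or lying where two families meet, so that the ``if and only if'' is sharp. A fully self-contained route would instead run Kovacic's algorithm directly on $v''=rv$, whose three cases correspond respectively to the reducible/dihedral, the tetrahedral--octahedral, and the icosahedral situations, the arithmetic conditions emerging from the existence of the required rational function; this is the same computation in heavier clothing, so I would present the Kimura-table argument and record the Kovacic verification only as a remark.
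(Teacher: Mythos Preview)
The paper does not prove this theorem: it is quoted verbatim from the reference \cite{almp} and used as a black box in the non-integrability argument for the AGK Hamiltonian. There is therefore no ``paper's own proof'' to compare against.

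Your sketch is the standard and correct route, and it is in fact the strategy of the original source: reduce the associated Legendre equation to a Riemann $P$-equation with three regular singular points, compute the exponent-difference triple $(\tilde\mu,\tilde\mu,2\nu+1)$, and apply Kimura's theorem. Your derivation of item~1 from Kimura's alternative~(i) is clean (the case $\varepsilon_1=-\varepsilon_2$ giving $\nu\in\mathbb{Z}$, the case $\varepsilon_1=\varepsilon_2$ giving $\tilde\mu\pm\nu\in\mathbb{Z}$), and the identification of family~$(a)$ with the dihedral row is correct. The only substantive work you have not written out is the intersection of the remaining Schwarz rows with the diagonal $\{\lambda_1=\lambda_2\}$ and the translation through $\lambda_3=2\nu+1$; you flag this honestly as bookkeeping, and that is exactly what it is. Your remark that Kovacic's algorithm provides an independent verification is also in the spirit of \cite{almp}, where both viewpoints are present.
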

    We recall that the singularities of Legendre equation \eqref{eleg} are of the regular type.
    \begin{theorem}[Acosta-Hum\'anez \& Bl\'azquez-Sanz, \cite{acbl}]\label{acbl}
         Assume $\mathbb{K}=\mathbb{C}(e^{i t})$. The differential Galois group of extended Mathieu differential equation 
$$
 \ddot y=(a+b\sin(t)+c\cos(t))y,\quad |b| + |c|\neq 0$$ 
         is isomorphic to $\mathrm{SL(2,\mathbb{C})}$.  
    \end{theorem}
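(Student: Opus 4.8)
Since the equation has no first‑derivative term its Wronskian is constant, so $\mathrm{DGal}\subset\mathrm{SL}(2,\mathbb C)$, and the whole question is whether this inclusion is an equality. To apply Kovacic's algorithm I would first pass to a rational base field by setting $z=e^{it}$, so that $\mathbb K=\mathbb C(z)$ with $\frac{d}{dt}=iz\frac{d}{dz}$. Using $\sin t=\frac{z-z^{-1}}{2i}$ and $\cos t=\frac{z+z^{-1}}{2}$, the equation becomes (with $'=d/dz$)
\[
z^{2}y''+zy'+\left(a+\beta z+\gamma z^{-1}\right)y=0,\qquad \beta=\frac{c-ib}{2},\quad \gamma=\frac{c+ib}{2},
\]
the hypothesis $|b|+|c|\neq0$ being exactly $(\beta,\gamma)\neq(0,0)$. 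The substitution $y=z^{-1/2}u$ puts this into reduced form $u''=\tilde r u$ with
\[
\tilde r=-\frac{\gamma}{z^{3}}-\frac{4a+1}{4z^{2}}-\frac{\beta}{z}\ \in\ \mathbb C(z);
\]
since the reduced and original equations have differential Galois groups with the same identity component and $\mathrm{SL}(2,\mathbb C)$ is connected, it suffices to show $\mathrm{DGal}(u''=\tilde r u)=\mathrm{SL}(2,\mathbb C)$. The decisive feature is the singular structure of $\tilde r$: if $\gamma\neq0$ it has a pole of order $3$ at $z=0$, and if $\gamma=0$ (hence $\beta\neq0$) a pole of order $3$ at $z=\infty$; thus $\tilde r$ always has a ramified irregular singular point at which some pole has odd order exactly $3$, near which the local solutions acquire a factor $\exp\!\left(\pm2\sqrt{-\gamma}\,z^{-1/2}\right)$ (respectively $\exp\!\left(\pm2\sqrt{-\beta}\,z^{1/2}\right)$).

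Next I would invoke the four‑case classification underlying Kovacic's algorithm: an algebraic subgroup $G\subseteq\mathrm{SL}(2,\mathbb C)$ is, up to conjugacy, (1) reducible; (2) imprimitive but not reducible, i.e.\ inside the infinite dihedral group normalizing a maximal torus; (3) finite and primitive; or (4) all of $\mathrm{SL}(2,\mathbb C)$. The order‑$3$ pole eliminates (1) and (3) immediately: Kovacic's necessary condition for (1) is that every pole of $\tilde r$ have order $1$ or even order, while a finite group would force every solution to be algebraic, hence every singularity to be regular‑singular (poles of order $\le2$); the order‑$3$ pole — equivalently the essential singularity $\exp(\pm2\sqrt{-\gamma}\,z^{-1/2})$ — contradicts both.

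The genuine obstacle is to rule out the imprimitive case (2), which an odd pole of order $\ge3$ does \emph{not} exclude. Here the convenient reformulation is that (1) or (2) holds if and only if the symmetric square equation $v'''=4\tilde r v'+2\tilde r' v$ has a solution $v$ with $v'/v\in\mathbb C(z)$ — namely $v=y_1^{2}$ in case (1) and $v=y_1y_2$ in case (2), in which event $v'/v=-P$, where $P\in\mathbb C(z)$ is the coefficient of the degree‑two minimal polynomial of $y_1'/y_1$ and satisfies $P''=4P\tilde r+3PP'-P^{3}-2\tilde r'$. Accordingly I would run Case~2 of Kovacic's algorithm, equivalently search for such a $P$ directly. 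The point that makes this work is that the order‑$3$ pole forces the local datum there to be rigid: the associated set $E_c$ is the singleton $\{3\}$, which pins down the principal part of $P$ at that pole (a short computation gives $P\sim-\frac{3}{2z}$), and the datum at the remaining singular point is equally forced; so Step~2 leaves a single candidate $P$ — a rational function with prescribed poles only at $z=0$ and $z=\infty$ — and one verifies that the polynomial/consistency conditions of Step~3 are then inconsistent, equivalently that a Stokes multiplier at the ramified singularity does not vanish. (When $b^{2}+c^{2}\neq0$ one may first normalize $b\sin t+c\cos t$ to $A\cos t$, $A\neq0$, via the automorphism $z\mapsto e^{i\varphi}z$, recovering the classical Mathieu equation; on the degenerate line $b^{2}+c^{2}=0$ one has $\tilde r=-\gamma z^{-3}-\frac{4a+1}{4}z^{-2}$, which $z\mapsto s^{-2}$ turns into a Bessel equation of index $1$, whose group is the classically known $\mathrm{SL}(2,\mathbb C)$.) With cases (1)--(3) all excluded, $\mathrm{DGal}=\mathrm{SL}(2,\mathbb C)$.
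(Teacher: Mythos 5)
The paper does not prove this statement; it is quoted from \cite{acbl}, and the proof there proceeds exactly along the lines you set up: algebrization via $z=e^{it}$ (the remark after the theorem notes that $z=\infty$ becomes an irregular singular point) followed by Kovacic's algorithm. Your algebrization, the reduced potential $\tilde r=-\gamma z^{-3}-\tfrac{4a+1}{4}z^{-2}-\beta z^{-1}$, and the exclusion of the reducible and finite-primitive cases by the order-$3$ pole are all correct. The genuine gap is in the decisive step, the exclusion of the imprimitive (dihedral) case: you assert that ``Step~2 leaves a single candidate $P$'' and that ``one verifies'' the Step~3 conditions are inconsistent, but you never carry this out, and the description is in fact wrong. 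When $b^2+c^2\neq 0$, i.e.\ $\beta\gamma\neq 0$, Kovacic's Case~2 already dies at Step~2: $E_0=\{3\}$, and since the order of $\tilde r$ at $\infty$ is $1$ (because $\tilde r\sim -\beta/z$; ``pole of order $3$ at $\infty$'' is not Kovacic's bookkeeping, even though your local description of the ramified irregular point is right), one has $E_\infty=\{1\}$, so $d=\tfrac12(e_\infty-e_0)=-1<0$ and no candidate exists at all — there is no Step~3 computation and no ``Stokes multiplier'' to discuss. So the heart of the argument is an unexecuted claim whose supporting details are incorrect, even though the true verification is short.

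Your treatment of the degenerate line $b^2+c^2=0$ (equivalently $\beta\gamma=0$ with $(\beta,\gamma)\neq(0,0)$) is also wrong: pulling back $\tilde r=-\gamma z^{-3}-\tfrac{4a+1}{4}z^{-2}$ by the quadratic substitution gives the reduced Bessel equation with index $\nu=2\sqrt{-a}$, not index $1$, and for $a=-(2k+1)^2/16$ this is a half-odd-integer Bessel equation, which is Liouvillian; over $\mathbb{C}(z)$ the Galois group is then contained in the infinite dihedral group and is not $\mathrm{SL}(2,\mathbb{C})$. Hence your argument fails on that line, and in fact the statement itself only holds there if that line is excluded — e.g.\ by taking $a,b,c$ real, as in the original source and in the application made in this paper, where the Mathieu equation \eqref{mathie1} has real coefficients and so $b^2+c^2\neq 0$. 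The repair is to restrict to $b^2+c^2\neq 0$, where the Step~2 count above, together with your (correct) exclusion of Cases 1 and 3 via the order-$3$ pole, immediately yields $\mathrm{DGal}=\mathrm{SL}(2,\mathbb{C})$.
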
     
 Let us remark that the  variable change  $x =e^{it}$ makes  the differential
equation algebraic, such that $x=\infty$ is an irregular singularity.
 
\subsection{Morales Ramis Theory}
A Hamiltonian $H$ in $\mathbb{C}^{2n}$ is called \emph{integrable in the sense of Liouville} if
there exist $n$ independent first integrals of the Hamiltonian system in involution. We 
say that $H$ is integrable \emph{in terms of rational functions} if we can
find a complete set of integrals within the family of rational functions.
Respectively, we can say that $H$ is integrable \emph{in terms of
    meromorphic functions} if we can find a complete set of integrals within the
family of meromorphic functions.
Morales-Ramis theory relates integrability of Hamiltonian systems in the Liouville sense with the integrability of Picard-Vessiot theory in terms of differential Galois theory, see \cite{morales, morasurvey, mora1, mora2}.

Let $V$ be a meromorphic homogeneous potential.
We say that 
$c\in \mathbb{C}^n\setminus\{0\}$ is a {\em Darboux point} if there exists $\alpha\in \mathbb{C}\setminus\{0\}$
such that
$$
\frac{\partial}{\partial q_j} V(c) = \alpha c_j   \qquad \forall \; j=1,\dots , n. 
$$

\begin{theorem}[Morales-Ramis, \cite{morales, mora1,mora2}]\label{mora}
    \label{:MR} Let $H$ be a Hamiltonian in $\mathbb{C}^{2n}$, and $\gamma$ a
    particular solution such that the variational  equation associated to the system has regular (resp. irregular)
    singularities at the points of $\gamma$ at infinity. If $H$ is
    integrable by terms of meromorphic (rational) functions,
    then the differential Galois group of the variational equation is virtually abelian.
\end{theorem}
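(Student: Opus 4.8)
The plan is to convert Liouville integrability of $H$ into the existence of many invariant functions for the differential Galois group $G$ of the variational equation along $\gamma$, and then to finish with a group-theoretic lemma. Fix notation: let $\Gamma$ be the Riemann surface traced by $\gamma$, let $\mathbb{K}$ be the differential field of rational functions on the compactification $\overline\Gamma$ in the rational case, or of meromorphic functions on $\Gamma$ in the meromorphic case, and recall (Kolchin, as quoted above) that ``virtually abelian'' means exactly that the identity component $G^{0}$ is abelian. The starting observation is that the variational equation (VE) $\dot\xi = DX_{H}(\gamma(t))\,\xi$ has coefficients in $\mathbb{K}$ and, because $X_{H}=J\,dH$ is a Hamiltonian vector field, $DX_{H}=J\,D^{2}H$ is infinitesimally symplectic; hence the VE is a linear Hamiltonian system with quadratic Hamiltonian $Q_{H}(\xi)=\frac{1}{2}\langle D^{2}H(\gamma(t))\xi,\xi\rangle$, its fundamental matrices lie in $\mathrm{Sp}(2n,\mathbb{C})$, $G$ is (up to conjugacy) an algebraic subgroup of $\mathrm{Sp}(2n,\mathbb{C})$, and $Q_{H}$ --- the Hamiltonian of the VE itself --- is a $G$-invariant first integral.

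Next I would linearize the remaining first integrals. Let $H=F_{1},F_{2},\dots,F_{n}$ be functionally independent meromorphic (resp.\ rational) first integrals in involution on a neighbourhood of $\gamma$. Expanding each $F_{j}$ in the directions transverse to $\Gamma$ and keeping its lowest-order nonconstant homogeneous part $f_{j}$ yields a function on the fibre $\mathbb{C}^{2n}$ with coefficients in $\mathbb{K}$ that is annihilated by the VE flow, i.e.\ a $G$-invariant, and $\{F_{i},F_{j}\}=0$ forces $\{f_{i},f_{j}\}=0$ for the induced constant symplectic structure; together with $f_{1}=Q_{H}$ this gives $n$ $G$-invariant functions in involution. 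The subtle point here is a Ziglin-type lemma ensuring the $f_{j}$ can be chosen functionally independent: if at some stage they are dependent, one lowers the order of the transverse expansion or deforms $\gamma$ within a suitable family of phase curves, using the meromorphy of the $F_{j}$ to keep the construction algebraic. This is also where the regular/irregular distinction enters: when only meromorphic --- not rational --- first integrals are at hand, one needs the singular points of the VE at the points of $\gamma$ at infinity to be regular, so that $\mathbb{K}$ and the Galois correspondence over it behave as in the rational (global) case.

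It then remains to establish the group-theoretic heart: a \emph{connected} algebraic subgroup $\widetilde G\subseteq\mathrm{Sp}(2m,\mathbb{C})$ admitting $m$ functionally independent rational invariant functions $f_{1},\dots,f_{m}$ in involution is abelian. Indeed, at a generic point $p$ the vectors $X_{f_{1}}(p),\dots,X_{f_{m}}(p)$ span a Lagrangian subspace $L_{p}\subseteq\mathbb{C}^{2m}$, and invariance of the $f_{i}$ forces $\mathrm{Lie}(\widetilde G)\cdot p = T_{p}(\widetilde G\cdot p)\subseteq\bigcap_{i}\ker df_{i}(p)=L_{p}$, so every orbit is generically isotropic. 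Identifying $\mathfrak{sp}(2m,\mathbb{C})$ with the homogeneous quadratic Hamiltonians under the Poisson bracket, for $\xi,\eta\in\mathrm{Lie}(\widetilde G)$ one has $\omega(\xi p,\eta p)=\{Q_{\xi},Q_{\eta}\}(p)$; since the left-hand side vanishes for generic $p$, the quadratic polynomial $\{Q_{\xi},Q_{\eta}\}$ vanishes identically, i.e.\ $[\xi,\eta]=0$. Hence $\mathrm{Lie}(\widetilde G)$ is abelian and so is $\widetilde G$. Applying this with $\widetilde G=G^{0}$ and $m=n$ (or, after the standard reduction removing the $X_{H}$- and $dH$-directions, with the normal variational equation and $m=n-1$) shows that $G^{0}$ is abelian, i.e.\ $G$ is virtually abelian; this is the scheme of Morales-Ruiz and Ramis \cite{morales}.

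I expect the main obstacle to be the Ziglin-type independence lemma of the second step: selecting the homogeneous parts $f_{j}$ and proving their functional independence requires careful control of the orders of vanishing of the $F_{j}$ along $\gamma$ and, in general, a genericity or deformation argument for the phase curve. One must also track throughout which differential field $\mathbb{K}$ is in play --- rational versus meromorphic --- together with the matching hypothesis on the singularities of the VE at infinity, so that Kolchin's theorem is applied to the right group. By contrast, the symplectic setup of Step 1 and the linear-algebra computation of Step 3 are structural and essentially formal.
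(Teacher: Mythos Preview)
The paper does not prove this theorem at all: Theorem~\ref{mora} is stated in the Preliminaries section as a quoted result of Morales-Ruiz and Ramis, with a bare citation to \cite{morales,mora1,mora2} and no argument. There is therefore no ``paper's own proof'' to compare against; the authors use the theorem as a black box.

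As an independent assessment of your sketch: the three-step architecture (symplectic structure of the VE and $G\subseteq\mathrm{Sp}(2n,\mathbb{C})$; Ziglin-type extraction of $n$ independent $G$-invariant polynomials in involution from the $F_j$; the algebraic lemma that a connected algebraic subgroup of $\mathrm{Sp}(2m,\mathbb{C})$ with $m$ independent rational invariants in involution is abelian) is exactly the scheme of the original proof, and your Step~3 computation is correct. Two places deserve sharpening. First, the role of the regular/irregular hypothesis is not merely that ``$\mathbb{K}$ behaves well'': the point is that with meromorphic first integrals one obtains invariants only for the \emph{monodromy} group, and one needs Schlesinger's density theorem (regular singularities) to pass from monodromy-invariance to Galois-invariance; with irregular singularities this fails, which is why rational integrals are then required. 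Second, the independence of the junior parts $f_j$ is not obtained by ``deforming $\gamma$'' but by a purely algebraic replacement argument (due to Ziglin and refined by Morales--Ramis and by Baider--Churchill--Rod--Singer): if the lowest-order terms become dependent, one replaces some $F_j$ by a suitable polynomial in $F_1,\dots,F_n$ so as to raise the order of vanishing, and iterates. With those two points tightened your outline matches the standard proof in \cite{morales}.
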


\begin{theorem} [Morales-Ramis \cite{morales, mora1,mora2}] Let $V \in \mathbb{C}(q_1,q_2)$ be a homogeneous rational function of homogeneity degree $k\ne -2,0,2$, and let $c \in \mathbb{C}^2\in\ 0\}$  be a Darboux point of $V$. If the potential $V$ is integrable, then for any eigenvalue $\lambda$ of the Hessian matrix of $V$ at $c$, the pair $(k, \lambda )$ belongs to the Table \ref{table:MR}, 
for some $j \in \mathbb{Z}$. 
\begin{table}[ht]
\centering 
\begin{tabular}{|c|c||r|c |} 
\hline
$k$ & $\lambda$ & $k$ & $\lambda$ \\
\hline
$\mathbb{Z}^*$ & $\frac{1}{2}jk (jk + k -2)$ & $-3$ &  $-\frac{25}{8} + \frac{1}{8} (\frac{12}{5}+ 6j)^2$\\
\hline 
$\mathbb{Z}^*$ & $\frac{1}{2}(jk+1) (jk + k -1)$ & $3$ &  $-\frac{1}{8} + \frac{1}{8} (2+ 6j)^2$\\
\hline
$-5$ & $-\frac{49}{8} + \frac{1}{8}(\frac{10}{3} + 10j)^2$ & $3$ &  $-\frac{1}{8} + \frac{1}{8} (\frac{3}{2}+ 6j)^2$\\
\hline
$-5$ & $-\frac{49}{8} + \frac{1}{8}(4 + 10j)^2$ & $3$ &  $-\frac{1}{8} + \frac{1}{8} (\frac{6}{5}+ 6j)^2$\\
\hline
$-4$ & $-\frac{9}{2} + \frac{1}{2}(\frac{4}{3} + 4j)^2$ & $3$ &  $-\frac{1}{8} + \frac{1}{8} (\frac{12}{5}+ 6j)^2$\\
\hline
$-3$ & $-\frac{25}{8} + \frac{1}{8}(2 + 6j)^2$ & $4$ &  $-\frac{1}{2} + \frac{1}{2} (\frac{4}{3}+ 4j)^2$\\
\hline
$-3$ & $-\frac{25}{8} + \frac{1}{8}(\frac{3}{2} + 6j)^2$ & $5$ &  $-\frac{9}{8} + \frac{1}{8} (\frac{10}{3}+ 10j)^2$\\
\hline
$-3$ & $-\frac{25}{8} + \frac{1}{8}(\frac{6}{5} + 6j)^2$ & $5$ &  $-\frac{9}{8} + \frac{1}{8} (4+ 6j)^2$\\
\hline
\end{tabular}
\caption{Morales-Ramis table.} 
\label{table:MR} \end{table}
\end{theorem}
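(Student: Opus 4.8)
The plan is to derive Table~\ref{table:MR} from the general Morales--Ramis criterion of Theorem~\ref{mora} by exhibiting an explicit particular solution whose variational equation can be integrated in closed form.

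\emph{Step 1 (the homothetic solution).} After rescaling we may assume the Darboux point satisfies $\nabla V(c)=c$. We look for a solution of Hamilton's equations of the product form $q(t)=\varphi(t)\,c$, $p(t)=\dot\varphi(t)\,c$. Since $\nabla V$ is homogeneous of degree $k-1$, this ansatz collapses Hamilton's equations to the single scalar ODE $\ddot\varphi=-\varphi^{\,k-1}$, which possesses the first integral $\frac12\dot\varphi^{\,2}+\frac1k\varphi^{\,k}=h$. Fixing a value $h\ne 0$ gives a concrete non-constant solution $\varphi(t)$, and the associated phase curve cut out by the first integral is algebraic; this is the particular solution $\gamma$ to which Theorem~\ref{mora} is applied, and whether its singular points at infinity are regular or irregular is read off directly from $\varphi$.

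\emph{Step 2 (decoupling the variational equation).} Along $\gamma$ the Hessian of $V$ equals $\varphi(t)^{\,k-2}\,V''(c)$ by homogeneity. Passing to linear coordinates that diagonalise the (generically diagonalisable) matrix $V''(c)$, the variational equation along $\gamma$ splits into independent scalar equations, one per eigenvalue; a generic eigenvalue $\lambda$ contributes
\[
\ddot\eta=-\lambda\,\varphi(t)^{\,k-2}\,\eta .
\]
By Euler's relation $V''(c)\,c=(k-1)c$ one eigenvalue always equals $k-1$ and corresponds to the tangential variation (with solution $\eta=\dot\varphi$); the content of the theorem is the constraint imposed by the normal variational equation on the remaining eigenvalue, and the analysis below applies to each eigenvalue in the same way.

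\emph{Step 3 (hypergeometric reduction and Kimura's theorem).} Using the first integral of Step~1 as a new independent variable, e.g.\ $z=\varphi^{\,k}/(kh)$, the displayed equation of Step~2 is transformed into a Gauss hypergeometric equation whose three local exponent differences $\rho_0,\rho_1,\rho_\infty$ at $0,1,\infty$ are explicit functions of $k$ and $\lambda$: two of them depend only on $k$ (one being $\frac12$), while the third involves $\lambda$ through an expression of the form $\frac1k\sqrt{(k-1)^2+2k\lambda}$ up to a fixed normalisation. Here $k\ne 0$ is needed to perform the substitution and $k\ne\pm2$ is needed so that the transformed equation is genuinely of hypergeometric type rather than a degenerate limit; these are exactly the excluded degrees. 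By Theorem~\ref{mora}, integrability of $V$ forces the identity component of the differential Galois group of this hypergeometric equation to be abelian, and Kimura's theorem, resting on Schwarz's list, describes precisely when a hypergeometric equation has virtually abelian monodromy: either one of the four numbers $\pm\rho_0\pm\rho_1\pm\rho_\infty$ is an odd integer (the reducible/dihedral family), or the unordered triple of exponent differences, up to half-integer shifts, occurs in the finite Schwarz list (the tetrahedral, octahedral and icosahedral cases). Substituting back the values of $\rho_0,\rho_1,\rho_\infty$, the first alternative produces the two infinite arithmetic families $\lambda=\frac12 jk(jk+k-2)$ and $\lambda=\frac12(jk+1)(jk+k-1)$, and the second produces the finitely many sporadic values that occur only for $k\in\{-5,-4,-3,3,4,5\}$. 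Assembling these cases is exactly Table~\ref{table:MR}.

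\emph{Main obstacle.} The conceptual steps are standard; the real work lies in Step~3. One must carry out the change of variable carefully enough to pin down the exact hypergeometric parameters, then translate Kimura's list without error --- in particular tracking the two branches coming from the two admissible choices of local exponent at the regular singular points (the origin of the two generic rows of the table), checking that for $|k|\ge 6$ the finite Schwarz cases never arise so that only the generic families survive, and verifying that $k\in\{-2,0,2\}$ are precisely the degrees at which the reduction degenerates, so that no admissible pair is lost by excluding them.
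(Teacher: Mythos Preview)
The paper does not actually prove this theorem: it is stated in the preliminaries section as a known result, with citation to Morales--Ramis \cite{morales, mora1, mora2}, and is then used as a black box (via the Bostan--Combot--Safey El Din algorithm) in Section~\ref{resultados}. There is therefore no ``paper's own proof'' to compare against.

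That said, your outline is the standard argument from the original Morales--Ramis papers: homothetic solution along the Darboux direction, diagonalisation of the Hessian to decouple the variational system, Yoshida-type change of variable $z\propto\varphi^{k}$ reducing the normal variational equation to a Gauss hypergeometric equation, and then Kimura's classification (built on Schwarz's list) to determine exactly when the identity component of the differential Galois group is abelian. The structure and the identification of where the sporadic rows ($|k|\in\{3,4,5\}$) and the two generic families come from are correct. If you were to complete this into a full proof you would need to write out the explicit exponent differences---the usual normalisation gives $\rho_0=\frac{k-1}{k}$, $\rho_1=\frac12$, $\rho_\infty=\frac{1}{k}\sqrt{(k-2)^2+8k\lambda}$ (or an equivalent form)---and then carefully match Kimura's cases; your sketch acknowledges this bookkeeping as the main obstacle, which is accurate.
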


\subsection{Galoisian integrability of Sch\"odinger equation}\label{sec:sch}
The analysis of the  integrability of the Schr\"odinger equation using differential Galois theory has been studied in \cite{acbook,acmowe}, using as main tools \emph{Kovacic algorithm} and \emph{Hamiltonian algebrization}; wave functions and spectrum of Schr\"odinger equations with potentials known as shape invariant were obtained.

It is well known that the one-dimensional stationary non-relativistic Schr\"odinger equation is giving by
\begin{equation}\label{eq-schr1}
\Psi'' = (V(y)-\lambda)\Psi,
\end{equation}
where $V$ is the potential and  $\lambda$ is called energy.

A special class of potentials are the called P\"oschl-Teller  (see \cite{poschl,T}) which transforms 
(\ref{eq-schr1}) into
\begin{equation}\label{pteq}
\Psi''=\left(-{\ell(\ell+1)\over \cosh^2 y}+n^2\right)\Psi, \qquad y\in \mathbb{R},
\end{equation}
where  $n,\ell\in\mathbb{Z}^+$. 
It is possible to algebrize this equation by means of the change  of variables  $z = \tanh y$. This substitution transforms the Schr\"odinger equation (\ref{pteq})  into
\begin{equation}\label{eq-legendre}
 (1-z^2){d^2\Psi \over dz^2} -2z{d\Psi\over dz}+\left({\ell(\ell+1)}-{n^2\over 1-z^2}\right) \Psi=0,
 \end{equation}
 which is a  differential equation for the associated Legendre function. Two linearly independent solutions of
(\ref{eq-legendre})  are given by the associated Legendre functions of the first and second kind, respectively.

\section{Main results}\label{resultados}
We start by giving the  known integrable cases of the Hamiltonian (\ref{ham-agk})
with associated  differential equations 
\begin{eqnarray}\label{eq-ham}
\dot{x} &=& p_x, \nonumber\\
\dot{y} &=& p_y,\nonumber\\
p_x &=& \mu x+ a(x^2+y^2)x+bxy^2, \nonumber\\
p_y &=& \mu y+ a(x^2+y^2)y+bx^2y. \nonumber
\end{eqnarray}

There exist values of the parameters  for which  the system has additional symmetry and (\ref{ham-agk}) is completely integrable:

\begin{enumerate}
\item[i)] If $b=0$, by considering polar coordinates $x=r\cos\theta$ and $y=r\sin\theta$, the Hamiltonian (\ref{ham-agk}) becomes
\begin{equation}\label{ham-b0}
H(x,y)= \frac{p_r^2}{2}+\frac{p_\theta^2}{2r^2} -\frac{\mu}{2}r^2-\frac{a}{4} r^4,
\end{equation}
which does not depend on $\theta$. Since $p_\theta$ is a second independent first integral
the angular momentum $r^2\dot{\theta} = x\dot{y}-\dot{x}y$ is conserved.
\item[ii)] If $a=-b$, system (\ref{eq-ham}) reduces to two identical uncoupled Duffing's oscillators
$$\ddot{x} =  \mu x+ax^3 \quad \mbox{and} \quad
\ddot{y}  = \mu y+ay^3, $$
which can be integrated by quadrature, see \cite{wigg}.
\item[iii)] In case  $b=2a$, we employ the symplectic transformation
\begin{eqnarray*}
x=\frac{1}{\sqrt{2}}(u-v), &&  y=\frac{1}{\sqrt{2}}(u+v),\\
p_x=\frac{1}{\sqrt{2}}(p_u-p_v), && p_y=\frac{1}{\sqrt{2}}(p_u+p_v)
\end{eqnarray*}
and one  easily finds  that the Hamiltonian  (\ref{ham-agk}) becomes
\begin{equation}\label{ham-b2a}
H(u,v)=\frac{1}{2}(p_u^2+p_v^2)-\frac{\mu}{2}(u^2+v^2)-\frac{a}{2}(u^4+v^4).
\end{equation}
The transformed Hamiltonian  system decouples into two single degree of freedom oscillator $$
\ddot{u}-\mu u - 2a u^3=0 \quad \mbox{and} \quad   \ddot{v}-\mu v -2av^3=0,
$$
and each of the them  is integrable by quadrature.
\end{enumerate}

Let us remark that the first two cases were previously discovered by Armbruster et. al \cite{agk}. 
Elmandouh \cite{elmandouh}  also cited these two cases and added the case $b=2a$ when the  angular velocity $\omega=0$. 

\subsection{Application  of Bostan-Combot-Safey El Din algorithm}
The AGK Hamiltonian has the following polynomial potential which can be grouped into two homogeneous
potentials of grades two and four: 
\begin{equation}
V(x,y)=- \mu(x^2+y^2) +\frac{a}{4}(x^2+y^2)^2 -\frac{b}{2}x^2y^2=V_{min}+V_{max}.
\end{equation}
An old result of Hietarinta (\cite{hietarinta}, Corollary  of Theorem 3') proved again by Yoshida \cite{yoshida} and Nakagawa \cite{Naka} states the following.
\begin{proposition}\label{H}
Suppose that the Hamiltonian with a non-homogeneous polynomial potential,
$$H=\frac{1}{2}(p_1^2+p_2^2) + V_{min}(q_1, q_2) +  \dots + V_{max}(q_1, q_2)$$
admits an additional rational first integral $\Phi = \Phi (q_1,q_2,p_1,p_2)$.
Here $V_{min}$ is the lowest degree part of the potential and $V_{max}$ the highest degree part. 
Then, each of the two Hamiltonian,
$$H_{min} = (p_1^2+p_2^2)+V_{min}(q_1,q_2), \qquad  H_{max} = (p_1^2+p_2^2)+V_{max} (q_1,q_2) $$
also admits an additional rational first integral.
\end{proposition}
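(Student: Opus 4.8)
The plan is to exploit the quasi-homogeneous scaling that isolates the lowest- and highest-degree parts of the potential. Write $V=V_m+V_{m+1}+\dots+V_M$ with $V_j$ homogeneous of degree $j$, so that $V_{\min}=V_m$ and $V_{\max}=V_M$ (for the AGK potential, $m=2$, $M=4$). To produce the additional integral for $H_{\max}$, assign $q_i$ the weight $1$ and $p_i$ the weight $M/2$. Then $\frac{1}{2}(p_1^2+p_2^2)$ and $V_{\max}$ are each quasi-homogeneous of weight $M$, while $V_j$ has weight $j<M$ for every $j<M$; hence $H$ splits as $H=H^{(M)}+(\text{terms of weight}<M)$ with $H^{(M)}=\frac{1}{2}(p_1^2+p_2^2)+V_{\max}$, which is --- up to the constant in front of the kinetic term --- exactly $H_{\max}$. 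The dynamical picture behind this is that blowing up phase space by $q_i\mapsto\varepsilon^{-1}q_i$, $p_i\mapsto\varepsilon^{-M/2}p_i$ and suitably reparametrising time converts, as $\varepsilon\to0$, the flow of $H$ into that of $H_{\max}$.

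Next I would transport the first integral through this limit. Write $\Phi=A/B$ with $A,B\in\mathbb{C}[q_1,q_2,p_1,p_2]$ coprime; then $\{H,\Phi\}=0$ is equivalent to $B\{H,A\}=A\{H,B\}$. Since the Poisson bracket sends a pair of quasi-homogeneous functions of weights $u$ and $v$ to one of weight $u+v-1-M/2$, the top-weight homogeneous part of $\{H,A\}$ is $\{H^{(M)},A^{(\mathrm{t})}\}$, where $A^{(\mathrm{t})}$ is the top-weight part of $A$, and likewise for $B$. Comparing top-weight parts in $B\{H,A\}=A\{H,B\}$ yields $B^{(\mathrm{t})}\{H^{(M)},A^{(\mathrm{t})}\}=A^{(\mathrm{t})}\{H^{(M)},B^{(\mathrm{t})}\}$, and dividing by $(B^{(\mathrm{t})})^2$ gives $\{H^{(M)},A^{(\mathrm{t})}/B^{(\mathrm{t})}\}=0$. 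Hence $\Psi:=A^{(\mathrm{t})}/B^{(\mathrm{t})}$ is a rational first integral of $H^{(M)}$, and therefore of $H_{\max}$. (If $\{H^{(M)},A^{(\mathrm{t})}\}$ or $\{H^{(M)},B^{(\mathrm{t})}\}$ vanishes the conclusion is only easier, since then $A^{(\mathrm{t})}$ or $B^{(\mathrm{t})}$ is itself such a first integral.)

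The step I expect to be the main obstacle is checking that $\Psi$ is \emph{additional}, i.e.\ functionally independent of $H_{\max}$, so that the breakdown of integrability genuinely descends. If $\Psi$ is functionally dependent on $H^{(M)}$ then, being quasi-homogeneous and a rational function of the quasi-homogeneous $H^{(M)}$, it must equal $c\,(H^{(M)})^{k}$ for some $c\in\mathbb{C}^{*}$ and $k\in\mathbb{Z}$. Replacing $\Phi$ by $\Phi^{-1}$ if $k<0$, we may assume $k\ge0$; then $\Phi-c\,H^{k}$ is again an additional rational first integral of $H$, but with a strictly smaller top-weight numerator. Iterating, this reduction must terminate --- otherwise $\Phi$ would be a rational function of $H$ alone, contradicting functional independence --- and showing that this induction is well founded is the bulk of the work.

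The claim for $H_{\min}$ follows by the mirror-image argument: assign instead $p_i$ the weight $m/2$, so that $H_{\min}$ becomes the \emph{lowest}-weight part of $H$ (dynamically, contract toward the origin via $q_i\mapsto\varepsilon q_i$, $\varepsilon\to0$), and extract the bottom-weight quotient $A_{(\mathrm{b})}/B_{(\mathrm{b})}$ in the same way. The constant $\frac{1}{2}$ on the kinetic terms is immaterial throughout: the rescaling $p_i\mapsto\lambda p_i$ turns the Hamiltonian vector field of $\frac{1}{2}(p_1^2+p_2^2)+V$ into a scalar multiple of that of $(p_1^2+p_2^2)+V$, hence preserves the algebra of rational first integrals together with functional independence. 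This gives the proposition for $H_{\min}=(p_1^2+p_2^2)+V_{\min}$ and $H_{\max}=(p_1^2+p_2^2)+V_{\max}$ as stated.
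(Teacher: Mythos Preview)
The paper does not prove this proposition at all: it is stated as a known result, attributed to Hietarinta (Corollary of Theorem~3$'$), with later reproofs by Yoshida and Nakagawa, and is simply quoted without argument. So there is no ``paper's own proof'' to compare against.

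That said, your sketch is precisely the quasi-homogeneous weight (equivalently, scaling-limit) argument that those references use: give $q_i$ weight $1$ and $p_i$ weight $M/2$ (resp.\ $m/2$), so that $H_{\max}$ (resp.\ $H_{\min}$) is the leading (resp.\ lowest) weighted-homogeneous part of $H$, then pass to the leading parts of numerator and denominator of a rational integral. Your identification of the functional-independence step as the crux is exactly right; this is where the cited proofs spend their effort, and your reduction $\Phi\mapsto \Phi-cH^{k}$ is the standard descent. One small point to tidy in a full write-up: after subtraction the fraction $(A-cH^{k}B)/B$ need not be in lowest terms, so one must argue that the relevant weight (e.g.\ the degree of the numerator after reduction, or the difference of top weights) strictly decreases and is bounded below, to make the induction well founded. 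With that bookkeeping in place your argument reproduces the Hietarinta--Yoshida proof.
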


In our case $V_{min}=\mu(x^2+y^2)$ is obviously integrable for any value of $\mu$. It remains to determine for which values of the parameters  the quartic potential $V_{max}$ is
integrable. Homogeneous potentials have been worked before and after Morales-Ramis seminal theorem (\cite{combot}, \cite{maria}), in particular
Bostan et. al.  \cite{combot} take the potential to a polar form by making $x=r\cos{\theta}$, $y=r\sin{\theta}$ and reducing them to the form
\begin{equation}
V(r,\theta)=r^kF(e^{i\theta}).
\end{equation}

This change makes the evaluation of Morales-Ramis table  (Table \ref{table:MR}) for the necessary condition of integrability for homogeneous potentials easier.  Bostan et al. are able
to built a nice Maple algorithm using the following theorem.
\begin{theorem}(Bostan et. al.\cite{combot})
Let $V \in \mathbb{C}(x,y)$ be a homogeneous potential with polar representation $(F, k)$ and let $\Lambda$ be the following set
$$\tilde{\Lambda}(F,k):=\left \{ k-\frac{z^2F''(z) }{F(z)} : z\ne 0, F'(z)=0, F(z)\ne 0 \right\}. $$

Let $Sp_D (\Delta^2V )$ denote the union of the sets $Sp(\Delta^2V (c))$ taken over all Darboux points $c \in D$ of $V$,  then
$${k(k-1)}\cup Sp_D(\Delta^2V) = {k(k-1)}\cup{\tilde\Lambda}. $$  Moreover, if $V$ is integrable, then 
$\tilde{\Lambda} \in E_k$,
where $E_k$ is the so called Morales-Ramis table. 
\end{theorem}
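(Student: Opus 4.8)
The plan is to express both sides of the claimed identity in terms of the critical points of the one--variable function $F$, using the polar description $V(r,\theta)=r^{k}G(\theta)$ with $G(\theta):=F(z)$ and $z=e^{i\theta}$. First I would translate the Darboux condition. Since $\partial_{\theta}=iz\,\partial_{z}$ we have $\partial_{\theta}V=r^{k}\,iz\,F'(z)$, and because $z=e^{i\theta}\neq 0$ the gradient of $V$ is proportional to the radius vector at a point $c=r_{0}(\cos\theta_{0},\sin\theta_{0})$ exactly when $F'(z_{0})=0$, where $z_{0}=e^{i\theta_{0}}$. The remaining condition $\partial_{r}V(c)=\alpha r_{0}$ reads $k r_{0}^{\,k-1}F(z_{0})=\alpha r_{0}$, so that $\alpha=k r_{0}^{\,k-2}F(z_{0})$ and $\alpha\neq 0$ is equivalent to $F(z_{0})\neq 0$; conversely every $z_{0}$ with $F'(z_{0})=0\neq F(z_{0})$ is of the form $e^{i\theta_{0}}$ and, after a choice of $r_{0}$, yields a genuine Darboux point. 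Assuming $k\neq 2$, I would then normalize the Darboux point by $r_{0}^{\,k-2}F(z_{0})=1$; this is the normalization under which the Morales--Ramis table $E_{k}$ is written.

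The second step is to read off the spectrum of the Hessian $\Delta^{2}V$ at such a normalized Darboux point. From Euler's identity for a degree--$k$ homogeneous function, $\Delta^{2}V(c)\,c=(k-1)\nabla V(c)=(k-1)\alpha\,c$, so $c$ is an eigenvector whose eigenvalue, under the normalization above, equals $k(k-1)$. Since the trace of $\Delta^{2}V(c)$ is the Laplacian $\Delta V(c)$, the second eigenvalue is $\lambda=\Delta V(c)-(k-1)\alpha$. A computation in polar coordinates gives $\Delta V=r^{k-2}\bigl(k^{2}G(\theta)+G''(\theta)\bigr)$, and the operator identity $\partial_{\theta}^{2}=-z\,\partial_{z}-z^{2}\,\partial_{z}^{2}$ together with $F'(z_{0})=0$ yields $G''(\theta_{0})=-z_{0}^{2}F''(z_{0})$. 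Substituting $\alpha=k r_{0}^{\,k-2}F(z_{0})$ and $r_{0}^{\,k-2}F(z_{0})=1$,
\[
\lambda \;=\; r_{0}^{\,k-2}\bigl(k\,F(z_{0})-z_{0}^{2}F''(z_{0})\bigr)\;=\;k-\frac{z_{0}^{2}F''(z_{0})}{F(z_{0})}.
\]
Thus the spectrum of $\Delta^{2}V$ at any normalized Darboux point is $\bigl\{\,k(k-1),\ k-z^{2}F''(z)/F(z)\,\bigr\}$ for the associated critical point $z$, and as $z$ runs over all critical points of $F$ with $F(z)\neq 0$ the nontrivial eigenvalues sweep out exactly $\tilde{\Lambda}(F,k)$ while the trivial one is always $k(k-1)$. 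Adjoining $k(k-1)$ to both sides gives the identity $k(k-1)\cup Sp_{D}(\Delta^{2}V)=k(k-1)\cup\tilde{\Lambda}$.

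For the integrability statement I would invoke the Morales--Ramis theorem with Table \ref{table:MR}: if $V$ is integrable and $k\notin\{-2,0,2\}$, then for every Darboux point $c$ and every eigenvalue $\lambda$ of $\Delta^{2}V(c)$ the pair $(k,\lambda)$ lies in $E_{k}$. The value $k(k-1)$ always belongs to $E_{k}$ — it is the eigenvalue in the direction of $c$, along which the variational equation is automatically integrable, and for integer $k$ it is the $j=1$ instance of the family $\frac{1}{2}jk(jk+k-2)$ — so the constraint is automatic in the radial direction, and what remains is precisely that every element of $\tilde{\Lambda}$ lies in $E_{k}$, as claimed.

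The point that requires care is the bookkeeping in the correspondence between the critical points of $F$ on the $z$--line (or on $\mathbb{P}^{1}$) and the genuine Darboux points of $V$ in $\mathbb{C}^{2}\setminus\{0\}$: one must verify that nothing is lost or spuriously added when $F$ and $F'$ vanish simultaneously, when $F$ has no critical points with $F\neq 0$ at all, or when a critical point lies over one of the isotropic directions $x^{2}+y^{2}=0$ on which the polar chart degenerates. These borderline cases are exactly the ones absorbed by the harmless universal value $k(k-1)$ adjoined to both sides of the identity; once they are dispatched the remainder is the routine Euler--identity and chain--rule computation sketched above.
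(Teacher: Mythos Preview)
The paper does not contain a proof of this theorem: it is quoted as a result of Bostan, Combot and Safey El Din \cite{combot} and invoked as a black box to feed into their Maple algorithm. There is therefore no proof in the paper to compare your attempt against.

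That said, your sketch is correct and is essentially the argument one finds in the original reference. The key ingredients --- translating the Darboux condition $\nabla V(c)\parallel c$ into the vanishing of $\partial_\theta V$, hence of $F'(z)$; using Euler's identity to see that the radial direction is always an eigenvector of $\Delta^2 V(c)$ with eigenvalue $(k-1)\alpha$, equal to $k(k-1)$ after normalization; and reading off the transverse eigenvalue from the trace via $\Delta V = r^{k-2}(k^2G+G'')$ together with $G''(\theta_0)=-z_0^2F''(z_0)$ at a critical point --- are all in order, and your honest flagging of the degenerate cases (simultaneous vanishing of $F$ and $F'$, isotropic directions) is exactly where the careful version of the argument has to do a little more work. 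For the purposes of this paper, nothing more than the citation is needed.
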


This result allows the construction of a powerful algorithm which
 allows quick evaluation of integrability even to homogeneous potential of degree 9.

Applying the Bostan-Combot-Safey El Din algorithm given in \cite{combot}, we have found that the necessary values for integrability are $b=0$, $b=2a$ and $b=-a$ which
are the same of the well known integrable cases described at the beginning of this section.  The algorithm also inform us the spectrum of the
Hessian matrix for each necessary condition of rational integrability, yields
\begin{itemize}
\item $b=0$, the Darboux points have Hessian spectrums $[12,4]$,
\item $b-a=0$, the Darboux points have Hessian spectrums  $[12,12]$ or $[12,0]$,
\item $a+b=0$, the Darboux points have Hessian spectrums $[12,12]$ or $[12,0]$. \end{itemize}
We note that the trivial eigenvalue of the Hessian $\lambda=12$ appears also as a non trivial eigenvalue.

Now applying Theorem \ref{H} since the harmonic oscillator is trivially integrable we get the necessary condition for the integrability of the quartic
potential. The procedure described at the beginning of this section showing the separability of the potential for the classical values implies the following result.
\begin{theorem}\label{ratint}
The Armburster-Guckenheimer-Kim potential (\ref{ham-agk}) is rationally integrable if and only if $b = 0$, $b=2a$ and $b=-a$. 
\end{theorem}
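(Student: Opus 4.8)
The plan is to prove the two implications separately; the ``if'' direction is elementary, and the ``only if'' direction is the substantial one.

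For the ``if'' direction it suffices to exhibit, in each of the three cases, a second first integral that is a \emph{polynomial} (hence rational) function of $(x,y,p_x,p_y)$ and is functionally independent of $H$. When $b=0$, the reduction of (\ref{ham-agk}) to (\ref{ham-b0}) shows that the angular momentum $I=xp_y-yp_x$ is conserved. When $b=-a$, the equations of motion split into two uncoupled Duffing oscillators, and the energy $I=\frac{1}{2}p_x^2-\frac{\mu}{2}x^2-\frac{a}{4}x^4$ of the first one is an independent first integral. When $b=2a$, after the symplectic change of variables of case (iii) the system decouples again, and the energy $I=\frac{1}{2}p_u^2-\frac{\mu}{2}u^2-\frac{a}{2}u^4$ of the $u$-oscillator works. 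In all three cases $\{I,H\}=0$ and $I$ is independent of $H$, so $H$ is (polynomially, hence) rationally integrable.

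For the ``only if'' direction, suppose $H$ admits an additional rational first integral. Writing the potential of (\ref{ham-agk}) as $V=V_{min}+V_{max}$ with $V_{min}=-\frac{\mu}{2}(x^2+y^2)$ the lowest-degree part and $V_{max}=-\frac{a}{4}(x^2+y^2)^2-\frac{b}{2}x^2y^2$ the highest-degree part, Proposition~\ref{H} yields that the Hamiltonian $H_{max}=p_x^2+p_y^2+V_{max}$ also admits an additional rational first integral; the companion Hamiltonian $H_{min}$ is the harmonic oscillator and imposes no restriction (and if $\mu=0$ one simply applies the homogeneous theory to $H$ directly). So it remains to determine for which $(a,b)$ the homogeneous quartic potential $V_{max}$ is rationally integrable, and for this I would apply the quoted criterion of Bostan et al.\ \cite{combot} together with the $k=4$ rows of Table~\ref{table:MR}. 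Passing to polar coordinates $x=r\cos\theta$, $y=r\sin\theta$ and setting $z=e^{i\theta}$ gives $V_{max}=r^4F(z)$ with $F(z)=c_0+c_1(z^4+z^{-4})$, where $c_0,c_1$ are explicit linear expressions in $a,b$ and $c_1\neq 0$ exactly when $b\neq 0$. One computes $F'(z)=\mathrm{const}\cdot z^{-5}(z^8-1)$, so, apart from the degenerate central case $b=0$, where $F$ is constant and $V_{max}$ is trivially integrable, the nonzero critical points of $F$ are precisely the eighth roots of unity, falling into two orbits according to $z^4=1$ or $z^4=-1$. Evaluating $k-z^2F''(z)/F(z)$ on these two orbits (discarding, by the convention in the criterion, any critical point at which $F$ vanishes) expresses the spectral set $\widetilde{\Lambda}(F,4)$ as (at most) the two explicit rational functions $4+\frac{4b}{a}$ and $4-\frac{8b}{2a+b}$ of the single parameter $t=b/a$; the remaining degenerate case $a=0$, $b\neq 0$ is treated directly (and gives non-integrability). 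The necessary condition is then $\widetilde{\Lambda}(F,4)\subseteq E_4$, where $E_4=\{\,2j(4j+2):j\in\mathbb{Z}\,\}\cup\{\,\frac{1}{2}(4j+1)(4j+3):j\in\mathbb{Z}\,\}\cup\{\,-\frac{1}{2}+\frac{1}{2}(\frac{4}{3}+4j)^2:j\in\mathbb{Z}\,\}$.

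The heart of the argument, and the step I expect to be most delicate, is the resulting Diophantine elimination. Every element of $E_4$ is nonnegative, whereas the eigenvalue function $4-\frac{8t}{2+t}$ is negative for all $t>2$ and tends to $-4$ as $|t|\to\infty$; combined with the requirement that the other eigenvalue function $4+4t$ lie in $E_4$ (which already forces $t\geq -1$), this confines $t$ to the bounded interval $[-1,2]$, and intersecting $4+4t$ with the locally finite set $E_4$ leaves only a short explicit list of candidate values of $t$. Checking each candidate against the second condition leaves exactly $t\in\{-1,0,2\}$, i.e.\ $b\in\{-a,\,0,\,2a\}$, with the associated Hessian spectra $[12,4]$ for $b=0$ and $[12,12]$ or $[12,0]$ for $b=-a$ and $b=2a$, in agreement with the output reported from the Bostan--Combot--Safey El Din algorithm. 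The main care needed is bookkeeping: one must verify that the candidate list is genuinely finite and that the ``$F(z)=0$'' exclusions (which occur along the loci $b=\mp 2a$ in the two orbits) are correctly accounted for, so that no integrable value of $(a,b)$ is spuriously gained or lost. Combining the two directions yields the stated equivalence.
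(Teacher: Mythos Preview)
Your proposal is correct and follows the same overall strategy as the paper: the ``if'' direction uses the explicit separability in the three cases (exactly as in items (i)--(iii) at the start of Section~\ref{resultados}), and the ``only if'' direction combines Proposition~\ref{H} with the Bostan--Combot--Safey El Din criterion applied to the quartic part $V_{\max}$.

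The difference is one of detail, not of method. The paper simply reports the output of the Maple algorithm (``we have found that the necessary values for integrability are $b=0$, $b=2a$ and $b=-a$'') and lists the Hessian spectra, whereas you actually carry out the computation by hand: you write the polar representation $F(z)$, locate the eighth-root critical points, evaluate $\tilde\Lambda(F,4)=\{4+4t,\ 4-\frac{8t}{2+t}\}$ with $t=b/a$, and perform the Diophantine elimination against $E_4$. Your bounding argument (nonnegativity of $E_4$ forcing $t\in[-1,2]$, then checking the finitely many $\lambda_1\in E_4\cap[0,12]$) is sound and recovers precisely $t\in\{-1,0,2\}$ with the stated spectra. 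This is a genuine value-add over the paper's black-box invocation.

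One small slip: the $F(z)=0$ exclusion at the orbit $z^4=1$ occurs when $a=0$, not when $b=2a$; you already handle $a=0$ separately, so this does not affect the argument, but the parenthetical ``which occur along the loci $b=\mp 2a$'' should read ``$a=0$ and $b=-2a$'' for the two orbits respectively.
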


Note that for $\mu =0$ we have an homogeneous quartic potential  whose 
necessary conditions for rational  integrability are  the same values given in Theorem \ref{ratint}.

\subsection{Application of Morales-Ramis Theorem}
The Hamiltonian has the square symmetry, that is, it is invariant under a rotation by $\pi/2$, and is also time reversible.
So, the planes of symmetry are given by  planes which are invariant  under the action of the dihedral group $D_4$:
\begin{itemize}
    \item $\Gamma_1=\{(x,y,p_x,p_y):y=p_y=0\}$
    \item $\Gamma_2=\{(x,y,p_x,p_y):x=p_x=0\}$
    \item $\Gamma_3=\{(x,y,p_x,p_y):y=x,p_y=p_x\}$
    \item $\Gamma_4=\{(x,y,p_x,p_y):y=-x,p_y=p_x\}$
    \item $\Gamma_5=\{(x,y,p_x,p_y):y=x,p_y=-p_x\}$
    \item $\Gamma_6=\{(x,y,p_x,p_y):y=-x,p_y=-p_x\}$.
\end{itemize}

Let us observe that if we restrict  Hamiltonian (\ref{ham-agk}) to the invariant planes of symmetry
$\Gamma_{1,2}$ and  respectively $\Gamma_{3,4,5,6}$, we get the same form.  
Thus,  it is enough to  consider the following two cases:
\begin{subequations}
\begin{equation}\label{ham-ga1}
\displaystyle{h={p_x^2\over 2}-{\mu\over 2}x^2-{a\over 4}x^4, \qquad H\vert_{\Gamma_1}=h},
\end{equation}
\begin{equation}\label{ham-ga3}
\displaystyle{\widehat{h}={p_x^2\over 2}-{\mu\over 2}x^2-{2a+b\over 4}x^4, \qquad H|_{\Gamma_3}=2\widehat{h}}.
\end{equation}
\end{subequations}

We note that each one of these expressions  correspond to a Hamiltonian with one degree of freedom,
and also setting change $a\mapsto 2a+b$  into
(\ref{ham-ga1})  we obtain (\ref{ham-ga3}). So, along the paper we will  compute the conditions of
integrability of the {\em normal variational equations} over  $\Gamma_1$ and $\Gamma_3$, and also
we substitute  $a\mapsto 2a+b$ into the particular solution $x(t)$ obtained in $\Gamma_1$ to get them in $\Gamma_3$.

 In order to apply the Morales-Ramis theory  our strategy  consists in selecting  a non-equilibrium particular solution in the invariant planes  $\Gamma_1$  and $\Gamma_3$.
The following step is to obtain an integral curve for the Hamiltonian (\ref{ham-ga1}).
Thus, to perform our study we take into account that
a Hamiltonian is a conservative system and we can fix energy level for $\Gamma_1$, $h$; that is,
\begin{equation}\label{ham-work}
{p_x^2\over 2}-{\mu\over 2}x^2-{a\over 4}x^4=h.
\end{equation}

To solve it we rewrite (\ref{ham-work}) as
\begin{equation}\label{pareqh1b}
\displaystyle{\frac{dx}{dt}=\pm\sqrt{2h+\mu x^2+{a\over 2}x^4}}.
\end{equation}

We observe that Eq. \eqref{pareqh1b} for any value of $h$, $a$ and $\mu$ corresponds to the well known \emph{incomplete elliptic integral of first kind} (see \cite{abramowitz}), whose solutions are no always 
{\em elementary functions}.
In order to obtain a manageable equation with a not too
complicated base  fields, and get explicit solutions
for Eq. \cref{pareqh1b},  we should 
restrict the values of $h$.
Furthermore, to be able to compute Galois groups for the variational equations along such particular 
solutions,  we should stay with these solutions. 
In particular we choose the energy level  $h=0$, then the restricted last equation becomes
\begin{equation}\label{pareqh1}
\displaystyle{\frac{dx}{dt}=\pm\sqrt{\mu x^2+{a\over 2}x^4}}.
\end{equation}

Now, observe that  (\ref{pareqh1}) has  three equilibrium points given by
$\displaystyle{x=0,\pm i\sqrt{\frac{2\mu}{a}}}$.
To avoid triviality, we exclude these points and assume that $x(t)$ is not a constant.

In a similar way, to obtain algebraic solutions over a suitable differential field  in equation \eqref{pareqh1b} with $ah\neq 0$, we can choose the energy level $\displaystyle{h={\mu^2\over 4a}}$. Thus, the restricted equation becomes 
$$\pm\frac{dx}{dt}={\mu\over \sqrt{2a}}+ \sqrt{a\over 2}x^2.$$ Observe that the last equation has two equilibrium points given by $\displaystyle{x=\pm i\sqrt{\frac{\mu}{a}}}$. To avoid triviality, we exclude these points and assume that $x(t)$ is not a constant. Recall that similar results are obtained for the respective $\hat{h}$ in $\Gamma_3$.

Next, we  present a rigorous proof  of the  non-integrability of Hamiltonian (\ref{ham-agk})
 using the Morales-Ramis theory. Since this theorem gives conditions for integrability in  terms of the differential Galois group 
of  the variational equations along a particular solution,
we  compute the  linearized system of (\ref{eq-ham})  along a non-constant solution of the dynamical system  given above.

In a general frame we assume $\gamma (t)= (x(t), y(t), p_x(t), p_y(t))\in\Gamma_{1,3}$  is a non-stationary solution of (\ref{eq-ham}), then the variational equations $\dot{\xi}=A(t)\xi$ of
(\ref{eq-ham}) along $\gamma (t)$ are given by
\begin{equation*}
\left( \begin{array}{c}\dot{\xi}_1 \\ \dot{\xi_2} \\ \dot{\xi_3} \\  \dot{\xi_4} \end{array}\right)
=\begin{pmatrix}
0 & 0 & 1 & 0\\ 0 & 0 & 0 & 1\\ \mu + 3ax(t)^2 + (a+b) y(t)^2   & 2(a+b)x(t)y(t) & 0 & 0\\ 
2(a+b)x(t)y(t)  & \mu +(a+b)x(t)^2 + 3a y(t)^2 & 0 & 0
\end{pmatrix}
\left( \begin{array}{c}  \xi_1 \\  \xi_2  \\  \xi_3  \\  \xi_4 \end{array} \right).
\end{equation*}
%
Assume $\gamma (t)= (x(t), 0, p_x(t), 0)\in\Gamma_1$  is a non-stationary solution of (\ref{eq-ham}), then the variational equations $\dot{\xi}=A(t)\xi$ of
(\ref{eq-ham}) along $\gamma (t)$ are given by
\begin{equation*}
\left( \begin{array}{c}\dot{\xi}_1 \\ \dot{\xi_2} \\ \dot{\xi_3} \\  \dot{\xi_4} \end{array}\right)
=\begin{pmatrix}
0 & 0 & 1 & 0\\ 0 & 0 & 0 & 1\\ \mu +3ax(t)^2 & 0 & 0 & 0\\ 0 & \mu +(a+b)x(t)^2 & 0 & 0
\end{pmatrix}
\left( \begin{array}{c}  \xi_1 \\  \xi_2  \\  \xi_3  \\  \xi_4 \end{array} \right).
\end{equation*}
The second order variational system is composed 
of two uncoupled Schr\"odinger equations 
\begin{equation}\label{eqvar1}
\ddot\xi_1=\left(3ax^2(t)+\mu\right)\xi_1,
\end{equation}     
\begin{equation}\label{eqvar2}
\ddot\xi_2=\left((a+b)x^2(t)+\mu\right)\xi_2.\end{equation}  
It is well known that the tangential variational equations do not provide conditions of non-integrability of Hamiltonian systems with two degrees of freedom, see \cite{mora1}. It is easy to see that when we replace  $2a$ instead $b$ into 
Eq. (\ref{eqvar2}),  the variational normal and tangential equations are the same.  For this reason, from now on  we let out the tangential equations.

We now assume $\gamma (t)= (x(t), x(t), p_x(t), p_x(t))\in\Gamma_3$  is a non-stationary solution of (\ref{eq-ham}), then the variational equations of
(\ref{eq-ham}) along $\gamma (t)$ becomes
\begin{equation*}
\left( \begin{array}{c}\dot{\xi}_1 \\ \dot{\xi_2} \\ \dot{\xi_3} \\  \dot{\xi_4} \end{array}\right)
=\begin{pmatrix}
0 & 0 & 1 & 0\\ 0 & 0 & 0 & 1\\ \mu + (4a+b)x(t)^2 & 2(a+b)x(t)^2 & 0 & 0\\ 2(a+b)x(t)^2 & \mu +(4a+b)x(t)^2 & 0 & 0
\end{pmatrix}
\left( \begin{array}{c}  \xi_1 \\  \xi_2  \\  \xi_3  \\  \xi_4 \end{array} \right).
\end{equation*}
In this case the normal equation yields
\begin{equation}\label{eqvar2b}
\ddot\xi_2=\left((2a-b)x(t)^2+\mu\right)\xi_2.\end{equation}  

The computation is  carried out considering several combinations of the signs of the parameters $\mu$ and $a$ with
$\mu\neq 0$,  and distinguishing between cases $a=0$ and $a\neq 0$.  
We also assume  without loss of generality that the constants of integration of the normal variational equations are zero.

 \subsection{Case $a=0$: rational non-integrability}
 In this case the normal variational equation  (\ref{eqvar2})  takes the form
\begin{equation}\label{eqvar2-a0}
\ddot\xi_2=\left(bx^2(t)+\mu\right)\xi_2,\end{equation} 
respectively.

A simple computation shows that  $x(t)=\cos(\sqrt{\mu}t)$ with $h= -1/2$ is a particular solution of the variational equations on the symmetry plane $\Gamma_1$.
Therefore we  obtain that (\ref{eqvar2-a0}) along  $x(t)$ takes the form
$$\ddot\xi_2=\left(b\cos^2(\sqrt{\mu}t)+\mu\right)\xi_2.$$
Taking into account that $\cos^2(\sqrt{\mu}t)={1\over2}(1+\cos(2\sqrt{\mu}t)$,  the last equation  becomes
\begin{equation}\label{mathie1}
\ddot\xi_2=\left(\frac{b}{2}+\mu +\frac{b}{2}\cos(2\sqrt{\mu}t) \right)\xi_2\end{equation}     
which is a Mathieu equation.   
In a similar way, we obtain an equivalent result for $\Gamma_3$ by replacing $b \mapsto -b$.

 We obtain for $b\neq 0$ and $\mu\neq 0$, according to Theorem \ref{acbl}, that Eq. (\ref{mathie1}) 
 is non-integrable, see \cite{acbl}.
 Therefore,  we have proved the following result.
 \begin{theorem}
The Hamiltonian system \eqref{eq-ham}, with $a=0$, $b\neq 0$  and $\mu\neq 0$ 
is non-integrable through rational first integrals.
 \end{theorem}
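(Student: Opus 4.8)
The plan is to leverage the reduction already set up in the excerpt: on the invariant plane $\Gamma_1$ with $a=0$, the Hamiltonian restricts to $h = p_x^2/2 - (\mu/2)x^2$, a harmonic oscillator, and on the energy level $h=-1/2$ one has the explicit non-constant solution $x(t)=\cos(\sqrt{\mu}\,t)$ (after normalizing the amplitude). Substituting this into the normal variational equation \eqref{eqvar2-a0} and using the identity $\cos^2(\sqrt\mu\,t)=\tfrac12\bigl(1+\cos(2\sqrt\mu\,t)\bigr)$ turns \eqref{eqvar2-a0} into equation \eqref{mathie1}, which has exactly the form $\ddot\xi_2 = \bigl(\alpha + \beta\cos(\tau)\bigr)\xi_2$ with $\tau = 2\sqrt\mu\,t$, $\alpha = (b/2+\mu)/(4\mu)$ after rescaling time, and $\beta = (b/2)/(4\mu)\neq 0$ precisely when $b\neq 0$. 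This is the extended Mathieu equation covered by Theorem \ref{acbl}.

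The key steps, in order, are: (1) verify that $\gamma(t)=(\cos(\sqrt\mu\,t),0,-\sqrt\mu\sin(\sqrt\mu\,t),0)$ is indeed a non-stationary solution of \eqref{eq-ham} with $a=0$ lying in $\Gamma_1$ (immediate, since the $x$-equation decouples and becomes $\ddot x = \mu x$, and $p_y=y\equiv 0$ is consistent with $\dot p_y = (\mu + bx^2)\,y = 0$); (2) write down the normal variational equation along $\gamma$, namely \eqref{eqvar2-a0}, and perform the trigonometric reduction to \eqref{mathie1}; (3) rescale the independent variable by $t\mapsto t/(2\sqrt\mu)$ — legitimate since $\mu\neq 0$, and note that this does not change the differential Galois group — to bring \eqref{mathie1} into the normal form $\ddot\xi_2=(a'+b'\sin t + c'\cos t)\xi_2$ with $|b'|+|c'|\neq 0$; (4) invoke Theorem \ref{acbl} to conclude that the differential Galois group of the normal variational equation is $\mathrm{SL}(2,\mathbb{C})$; (5) apply the Morales-Ramis Theorem \ref{mora} (noting that after algebrization $x=e^{it}$ the point at infinity is an irregular singularity, so the meromorphic — hence also the rational — integrability version applies): since $\mathrm{SL}(2,\mathbb{C})$ is not virtually abelian, the Hamiltonian \eqref{eq-ham} cannot be integrable by rational first integrals. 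The parallel computation on $\Gamma_3$ with $b\mapsto -b$ gives the same conclusion but is not even needed once $\Gamma_1$ has done the job.

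The only genuinely delicate point — and the one I would be most careful about — is the matching of hypotheses in Theorem \ref{acbl}: one must confirm that after the change of variable $x=e^{i\sqrt{\mu}\,t}$ (or, after rescaling, $x=e^{it}$) the coefficient $b'\sin t + c'\cos t$ is genuinely nonzero, i.e. that $b\neq 0$ really does force $|b'|+|c'|\neq 0$, and that $\mu$ being a real nonzero parameter (of either sign) does not spoil the algebrization — if $\mu<0$ the "frequency" $\sqrt\mu$ is imaginary and $\cos(\sqrt\mu\,t)=\cosh(\sqrt{-\mu}\,t)$, so one should phrase the argument so that the algebrizing substitution and the base field $\mathbb{C}(e^{it})$ of Theorem \ref{acbl} are used correctly in both regimes; in either case the resulting equation is still an extended Mathieu-type equation with a nontrivial periodic term, so the conclusion is unaffected. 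A secondary routine check is that the constants of integration of the normal variational equation may be set to zero without loss of generality, which the excerpt has already granted. With these verifications in place, the theorem follows directly.
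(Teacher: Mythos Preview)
Your proposal is correct and follows essentially the same approach as the paper: take the non-constant solution $x(t)=\cos(\sqrt{\mu}\,t)$ on $\Gamma_1$ at energy $h=-1/2$, substitute into the normal variational equation to obtain the Mathieu equation \eqref{mathie1}, and invoke Theorem~\ref{acbl} together with Morales--Ramis to conclude rational non-integrability. Your added remarks about the time rescaling, the irregular singularity at infinity justifying the \emph{rational} (rather than meromorphic) conclusion, and the handling of the sign of $\mu$ are more explicit than what the paper spells out, but the underlying argument is the same.
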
  

 Now in the case $\mu=0$, $a=0$, $h=0$ and $\hat{h}\neq 0$
 the normal variational Eq. (\ref{eqvar2}) and Eq. (\ref{eqvar2b}) are no-integrables 
 because they correspond to quantic harmonic oscillators with zero energy level for both planes
 $\Gamma_1$ and $\Gamma_3$, see
 \cite{acbl,acmowe}. Finally, the cases $h=\hat{h}=0$ are not considered because we obtain for both planes
 $\Gamma_1$ and $\Gamma_3$ that the normal variational equations (\ref{eqvar2}) and  (\ref{eqvar2b}) are trivially integrables. The first one corresponds to a differential equation with constant coefficients and the second one corresponds to a Cauchy-Euler equation, which are always integrable with abelian differential Galois group, 
 see \cite{acbook,acmowe}.
 Thus Morales-Ramis Theorem does not  give any obstruction to  the integrability.

\subsection{Case $a\neq 0$: meromorphic non-integrability}
 We consider $\gamma (t)= (x(t), 0, p_x(t), 0)\in\Gamma_1$  which is a non-stationary solution of
(\ref{eq-ham}). Next, we solve Eq. \eqref{pareqh1} for $h=0$ and $\displaystyle{h={\mu^2\over 4a}}$ which give us 
possible particular solutions over $\gamma (t)$, shown in the Table \ref{tabla2}. 
\medskip

\begin{table}[ht]
\begin{center} 
\begin{tabular}{|c|c|c|} 
\hline
     & $h=0$ & $\displaystyle{h={\mu^2\over 4a}}$ \\ \hline
    1. & $\displaystyle{x(t)=\pm i\frac{\sqrt{\frac{2\mu}{a}}}{\cosh(\sqrt{\mu}t) } }$ 
    & $x(t)=\pm\displaystyle{i\sqrt{\frac{\mu}{a}}\tanh\left(\sqrt{\frac{\mu}{2}}t\right)}$ \\ \hline
    2. & $\displaystyle{x(t)=\pm{i\sqrt{2\mu\over a}\over\cos(i\sqrt{\mu}t)}}$ 
& $x(t)=\pm\displaystyle{i\sqrt{\mu\over a}\tan\left(i\sqrt{\mu\over 2}t\right)}$ \\ \hline
    3. & $\displaystyle{x(t)=\pm{\sqrt{2\mu\over a}\over
\sin\left(i\sqrt{\mu}t\right)} }$ & $x(t)=\pm\displaystyle{\sqrt{\mu\over a}\textrm{cot}\left(i\sqrt{\mu\over 2}t\right)}$ \\ \hline
  4. & $\displaystyle{ x(t)=\pm{\sqrt{2\mu\over a}\over
\sinh\left(\sqrt{\mu}t\right)}}$
 & $x(t)=\displaystyle{\pm\sqrt{\mu\over a}\textrm{coth}\left(\sqrt{\mu\over 2}t\right)}$ \\ \hline
\end{tabular} 
\end{center}
\caption{Particular solutions over $\gamma (t)$.}\label{tabla2}
\end{table}

These solutions are complex functions where also $t$ is a complex number. 
Moreover, we note that is not possible to get algebraic particular solutions for $h\neq 0$ with $\mu=0$, thus we exclude this case. So, we consider only one algebraic particular solution, given in the previous table, for each one of the energy levels ($h=0$, $h\neq 0$), to construct the normal variational equations. The remaining cases are obtained
through the transformations $t\mapsto it$ and $t\mapsto \frac{\pi}{2}-t$. Furthermore, the particular solutions over $\Gamma_3$ can be obtained by the change $a\mapsto 2a+b$ into the particular solutions over $\Gamma_1$.

We recall that ${\mathcal T}_\ell$, the $\ell$-th triangular number, is  
defined by
$${\mathcal T}_\ell = 1 + 2 + \dots + \ell = \frac{1}{2}\ell (\ell+1),  \qquad \ell\in\mathbb{Z}^+.$$

Since $a\in\mathbb{C}^*$, instead of working with the space of parameters
$\{(a,b,\mu)\in\mathbb{C}^3\}$ we work with  $\mathcal W:=\{\kappa\in
\mathbb{C}: \, \kappa=b/a, a\neq 0\}$.
We define the following subsets of $\mathcal W$ with $\mu\neq 0$ given as:  $$\Lambda_1:=\{
\kappa: \kappa= \mathcal T_\ell -1\},\qquad  
\Lambda_2:=\left\{\kappa: \kappa= 2\frac{1-\mathcal{T}_{\ell}}{1+\mathcal{T}_{\ell}}\right\}.$$

We define by $\Lambda:=\Lambda_1\cap\Lambda_2$ to introduce our main
result.
\begin{theorem} Assume $a\mu\neq 0$. If $\kappa\notin \Lambda$ then the Hamiltonian (\ref{ham-agk}) is 
non-integrable through meromorphic first integrals.
\end{theorem}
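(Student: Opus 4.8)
The plan is to apply the Morales--Ramis Theorem \ref{mora} along two particular solutions with energy $h=0$, one in the invariant plane $\Gamma_1$ and one in $\Gamma_3$, reducing the two normal variational equations to Legendre equations and reading off the obstruction from Theorem \ref{almp}.

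First, on $\Gamma_1$ with $h=0$ I take $x(t)=i\sqrt{2\mu/a}\,/\cosh(\sqrt{\mu}\,t)$ from Table \ref{tabla2}; the remaining entries of that column differ from it only by $t\mapsto it$ and $t\mapsto\tfrac{\pi}{2}-t$, so they give isomorphic differential Galois groups and one representative is enough. Since $x(t)^2=-(2\mu/a)/\cosh^2(\sqrt{\mu}\,t)$, the normal variational equation \eqref{eqvar2} becomes $\ddot\xi_2=\bigl(-2\mu(1+\kappa)/\cosh^2(\sqrt{\mu}\,t)+\mu\bigr)\xi_2$ with $\kappa=b/a$. Rescaling time by $\tau=\sqrt{\mu}\,t$ brings it to the P\"oschl--Teller form \eqref{pteq} with $n^2=1$ and $\ell(\ell+1)=2(1+\kappa)$, i.e.\ $\mathcal{T}_\ell=1+\kappa$; algebrizing by $z=\tanh\tau$ as in Section \ref{sec:sch} turns it into the Legendre equation \eqref{eleg} with $\tilde\mu=n=1$ and $\nu(\nu+1)=2(1+\kappa)$. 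Because $\tilde\mu=1\in\mathbb{Z}$, none of the non-integer families $(a)$--$(f)$ of Theorem \ref{almp} can occur, so this Legendre equation is integrable only in case $1$, which for $\tilde\mu\in\mathbb{Z}$ is equivalent to $\nu\in\mathbb{Z}$; and $\nu\in\mathbb{Z}$ with $\mathcal{T}_\nu=1+\kappa$, where we may take $\nu\geq0$, is exactly $\kappa\in\Lambda_1$. Hence, if $\kappa\notin\Lambda_1$ the equation is not integrable, so its differential Galois group, a subgroup of $\mathrm{SL}(2,\mathbb{C})$, must be all of $\mathrm{SL}(2,\mathbb{C})$, in particular not virtually abelian; since the Legendre singularities are regular, Theorem \ref{mora} gives that \eqref{ham-agk} is non-integrable through meromorphic first integrals.

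The computation on $\Gamma_3$ is identical after substituting $a\mapsto 2a+b$ in the particular solution, which replaces $x(t)^2$ by $-(2\mu/(2a+b))/\cosh^2(\sqrt{\mu}\,t)$; inserting this into the normal equation \eqref{eqvar2b} and rescaling gives the Legendre equation with $\tilde\mu=1$ and $\mathcal{T}_\nu=(2-\kappa)/(2+\kappa)$, whose integrability again forces $\nu\in\mathbb{Z}$, equivalently $\kappa=2(1-\mathcal{T}_\nu)/(1+\mathcal{T}_\nu)$, i.e.\ $\kappa\in\Lambda_2$. Therefore, if $\kappa\notin\Lambda=\Lambda_1\cap\Lambda_2$ then $\kappa\notin\Lambda_1$ or $\kappa\notin\Lambda_2$, and in the corresponding plane the normal variational equation has Galois group $\mathrm{SL}(2,\mathbb{C})$; since the Galois group of the normal variational equation is a quotient of that of the full variational equation, the latter is not virtually abelian either, and Morales--Ramis yields the non-integrability of \eqref{ham-agk} by meromorphic first integrals. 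I expect the only real difficulty to be bookkeeping: verifying that every sign variant of the particular solution and both signs of $\mu$ feed, through $t\mapsto it$, $t\mapsto\tfrac{\pi}{2}-t$ and the sign of the square root, into Legendre equations with the same pair $(\tilde\mu,\nu)$, so that Theorem \ref{almp} can be invoked uniformly; all the substantive analytic input is already contained in Theorems \ref{almp} and \ref{mora}.
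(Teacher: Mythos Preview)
Your proof is correct and follows essentially the same route as the paper: you take the $h=0$ particular solutions on $\Gamma_1$ and $\Gamma_3$, reduce the normal variational equations \eqref{eqvar2} and \eqref{eqvar2b} to P\"oschl--Teller/Legendre form via $\tau=\sqrt{\mu}\,t$ and $z=\tanh\tau$, and read off from Theorem \ref{almp} with $\tilde\mu=1$ that integrability forces $\kappa\in\Lambda_1$ (resp.\ $\kappa\in\Lambda_2$), then invoke Morales--Ramis. Your write-up is in fact slightly more explicit than the paper's in justifying why case 1 of Theorem \ref{almp} collapses to $\nu\in\mathbb{Z}$ and in noting the quotient relation between the Galois groups of the normal and full variational equations.
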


\begin{proof}
%
We consider only the case $h=\hat{h}=0$ in the invariant planes $\Gamma_1$ and $\Gamma_3$, respectively.

We begin with  the variational equation near  $\Gamma_1$. 
We use (\ref{ham-ga1}) with $h=0$ to get $x(t)$ which is given in the Table \ref{tabla2}.  Substituting the expression for 
$x(t)$ in (\ref{eqvar2}) with $\kappa=b/a$, we obtain
\begin{eqnarray}
\ddot\xi_{21}=\mu\left(-{2(1+\kappa)\over
\cosh^2\left(\sqrt{\mu}t\right)}+1\right)\xi_{21}\label{ev21},
\end{eqnarray}
where we assume coefficients in the differential field
$\mathbb{K}=\mathbb{C}(\tanh(\sqrt{\mu} t))$.
Through  the change $\tau=\sqrt{\mu}t$ and letting
$\displaystyle{\xi'=\frac{d\xi}{dz}}$, \eqref{ev21} is transformed into the Schr\"odinger equation with P\"oschl-Teller potential
\begin{equation}\label{integra-legendre}
\xi_{21}''=\left(-{2(1+\kappa)\over
\cosh^2\left(\tau\right)}+1\right)\xi_{21},
\end{equation}
which is transformed into the Legendre's differential equation with parameters $\ell$ and $\tilde{\mu}$ 
such that $2(1+\kappa)=\ell(\ell + 1)$ and $\tilde{\mu}=\pm1$. Since the second parameter is an integer, by Theorem \ref{almp}, we know that Eq. \eqref{ev21}  is integrable if only if
 $1\pm \ell \in \mathbb{Z}$ or $\ell\in \mathbb{Z}$, because we are only getting 
 the case 1 of Theorem \ref{almp}.
 So, $\ell (\ell
+1)$ must be  an integer, which implies that Eq. \eqref{ev21} has Liouvillian solutions if and only if $\kappa\in  \Lambda_1$. Moreover, the differential Galois group $\mathrm{DGal}(\mathbb{L}/\mathbb{K})$ of this normal variational equation Eq. \eqref{ev21} for $\kappa\in\Lambda_1$ is isomorphic to the additive group $\mathbb{C}$, which is a connected abelian group; while for $\kappa\notin \Lambda_1$ the differential Galois group $\mathrm{DGal}(\mathbb{L}/\mathbb{K})$ is an algebraic subgroup isomorphic to $\mathrm{SL}(2,\mathbb{C})$ and thus non abelian. Therefore, by Morales-Ramis Theorem we can conclude that if $\kappa\notin \Lambda_1$ then the Hamiltonian AGK is not integrable through meromorphic first integrals.

The normal variational equations for the second, third and fourth cases of the particular solutions $x(t)$ are transformed into  Eq. (\ref{ev21}) through the combination of the changes of variables $t\mapsto i t$ and  $t\mapsto \frac{\pi}{2}-t$.  
So we get similar results of integrability of Eq. \eqref{ev21} and  consequently  similar conditions of non-integrability 
for the Hamiltonian AGK. 

From here we consider the variational equation near  $\Gamma_3$.  
We use  (\ref{ham-ga3})  to obtain $x(t)$  which is for $\hat{h}=0$ given in the 
Table \ref{tabla2} with $2a+b$ instead of $a$. Now we substitute $x(t)$ 
 into the normal equation (\ref{eqvar2b}). It has the form
\begin{eqnarray}
\ddot\xi_{21}=\mu\left(-{2(2-\kappa)/(2+\kappa)\over
\cosh^2\left(\sqrt{\mu}t\right)}+1\right)\xi_{21}\label{ev21_3}.
\end{eqnarray}
As previously, we rescaled the independent variable  to get a Schr\"odinger equation with P\"oschl-Teller 
potential, which can be transformed into a Legendre's differential equation.
From (\ref{pteq}), we can conclude that   
$\displaystyle{\frac{2-\kappa}{2+\kappa} =  {\mathcal T}_\ell}$ or equivalently
$\displaystyle{\kappa = 2\frac{1-{\mathcal T}_\ell}{1+{\mathcal T}_\ell}}$.
A similar analysis applied  to the above case  leads to the  non-integrability conditions 
 through meromorphic first integrals for  Hamiltonian AGK with  $\kappa \notin \Lambda_2$.

Finally,  we conclude that the Hamiltonian AGK (\ref{ham-agk}) is non-integrable through  meromorphic first integrals for 
 $\kappa\notin \Lambda$.
\end{proof}

We observe that $\Lambda = \Lambda_1 \cap \Lambda_2 = \{-1,0,2\}$. Furthermore, the values $b=0, 2a, -a$ given by  the Theorem \ref{ratint}  correspond to 
 $\kappa = 0,2,-1$, respectively. Then, the former  satisfy   the necessary condition for  meromorphic integrability.

We would now like to make some remarks.
In order to apply Morales-Ramis theory as non-integrability criterium, it is 
enough to exhibit an orbit for which the normal variational equation is not abelian. 
Taking this into account and since 
$\widetilde{\mu}$ is not constant,  we have only been considering  the proof 
of the last theorem for the both cases $h=\hat{h}=0$. 
Moreover, if we replace any particular solution for $\displaystyle{h={\mu^2\over 4a}}$  into the normal variational equation, Eqs. \eqref{eqvar2} 
and carries out combinations of the change of variables $t\mapsto i t$ and  $t\mapsto \frac{\pi}{2}-t$,  we obtain the following normal variational equation
\begin{equation}\label{integra-legendre2}
\xi_{22}''=\left(-{2\left(1+\kappa\right)\over
\cosh^2\left(\tau\right)}+2\kappa+1\right)\xi_{22},
\end{equation}
which is transformed into the Legendre's differential equation with parameters $\ell$ and $\tilde{\mu}$ such that $2(1+\kappa)=\ell(\ell + 1)$ and $\tilde{\mu}^2=2\kappa+1$. We get equivalent conclusions for $\displaystyle{\hat{h}={\mu^2\over 4(2a+b)}}$ in the variational equation (\ref{eqvar2b}).

Finally, we remark that  for $\mu=0$ and $h=\hat{h}=0$, the normal variational equations (\ref{eqvar2}) and  (\ref{eqvar2b}) correspond to the Cauchy-Euler equation for $\Gamma_1$ and $\Gamma_3$, respectively,  given by
\begin{eqnarray*}
\ddot\eta_1=\frac{2(1+\kappa)}{t^2}\; \eta_1, \qquad\qquad
\ddot\eta_3=2\frac{2-\kappa}{(2+\kappa)t^2}\; \eta_3,
\end{eqnarray*}
which are always integrable with abelian differential Galois group, 
see \cite{acbook,acmowe}.
Thus Morales-Ramis Theorem does not  give any obstruction to  the integrability.
Furthermore for $h\neq 0$ and $\hat{h}\neq 0$ the particular solution is an elliptic integral of
first kind that is  not algebraic. In this case
the variational equations are not in a suitable differential field to
apply Morales-Ramis Theorem. 

\section{Numerical experiments}\label{sec-poincare}
In this section, we present a quick view of the system behaviour through 
the  Poincar\'e section technique.  We have chosen the Poincar\'e section  ($y=0$, $p_y > 0$). The values of the parameters 
were chosen as to view the system behaviour at the well known integrable cases, and then by moving   the values of one
of the parameters ($a$ or $b$),
carry the system to a region were KAM tori start being destroyed. The energy and $\mu$ are maintained constant for each set
of experiments.

The first example stemmed from the search of Armbruster et. al  reported in Figure 5 of  \cite{agk} which has the following set of parameters: $h=5.7$,  $\mu= - 5$ , $a = 1$ and $ b = 0.5$. We here have worked with $b$ going from zero to five. The Figures \ref{fig:1} and \ref{fig:2} show the evolution from the integrable case $a=1$ and $b=0$ to $a=1$ and $b=0.5$ where the superior part of the section is completely chaotic. This is the case of Figure 5  of  \cite{agk} where they have shown the chaotic layer only. The intermediary figures show how a meagre top and bottom region of KAM
tori, $b=0.1$,  evolves to partially chaotic region ($0. 3$, and finally to regions of  no tori at all since trajectories  become unbounded. Some KAM tori remain in the horizontal regions  which we show in Figure \ref{fig:2}.

In Figure \ref{fig:3}  the integrable case $a=1$, $b=-1$ with $h=3.5$ and $\mu=5$  is shown. Note now that the section of the tori
have rotated $\pi/2$ and this indicates a different way of breaking down. In fact, in Figure \ref{fig:3},  $b=2.8$ and keeping the remaining parameters the same, we see a stable periodic orbit at the origin and two unstable ones at each side of the figure.  Finally for $b=6$: the  top and
bottom cuts of the KAM tori break down with the internal trajectories escaping to non--closed regions of the phase space.  This behaviour can be seen in Figure \ref{fig:3}. So far, although the route to chaos seems different for the two cases, it seems that scape trajectories play an important role.

The last integrable case is $b=-a$ can be seen in Figures \ref{fig:4}. We have chosen as the starting value $a=1$ and $b=1$, the remaining values are $h=3.5$ and $\mu=5$. The cuts of the Poincar\'e section start centered and for $a=1.8$ there is  a horizontal
bifurcation and the orbits in the central region starts to scape, Figure \ref{fig:4} (middle). For $a=2.05$ the central region is more eroded and all the bordering trajectories escape, Figure \ref{fig:4} (bottom).

Finally an example o chaotic behaviour that contradicts all the previous experiments.  As can be seen in Figures \ref{fig:5}, there is no escaping orbits among the chaotic ones. We started with $a=1$, $b=-1$ and the value of the Hamiltonian $h=0.2$ and 
$\mu=1$ maintained constant through the evolution of $b$ from $-1$ to $-5$. The section in $a=1$ and $b=-1$ is similar to
Figure \ref{fig:4} (top). When $b=-2.5$ there is  already  a major bifurcation in the vertical direction.  Finally when $b=-5$ there is chaos
at large with some islands still left. Apparently the value of $\mu$ is responsible for this behaviour.

\begin{figure}[tbhp]
\centering
\includegraphics[scale=0.85]{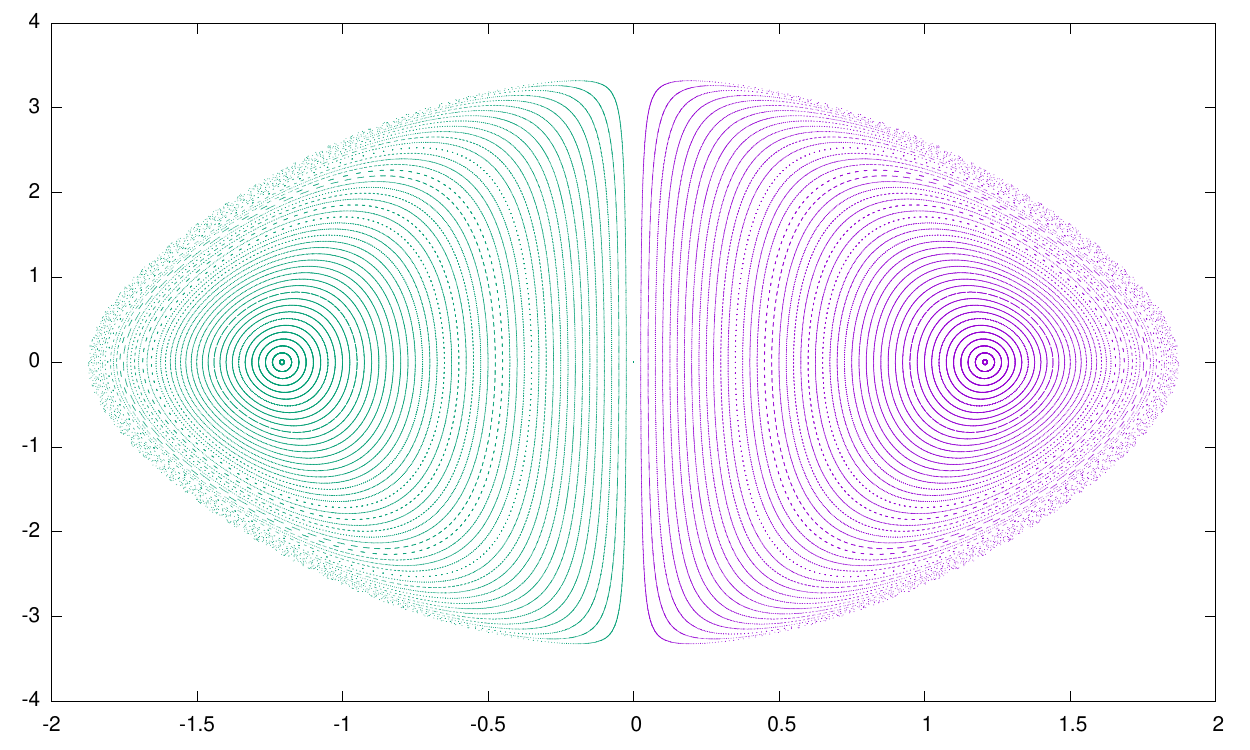}
\includegraphics[scale=0.85]{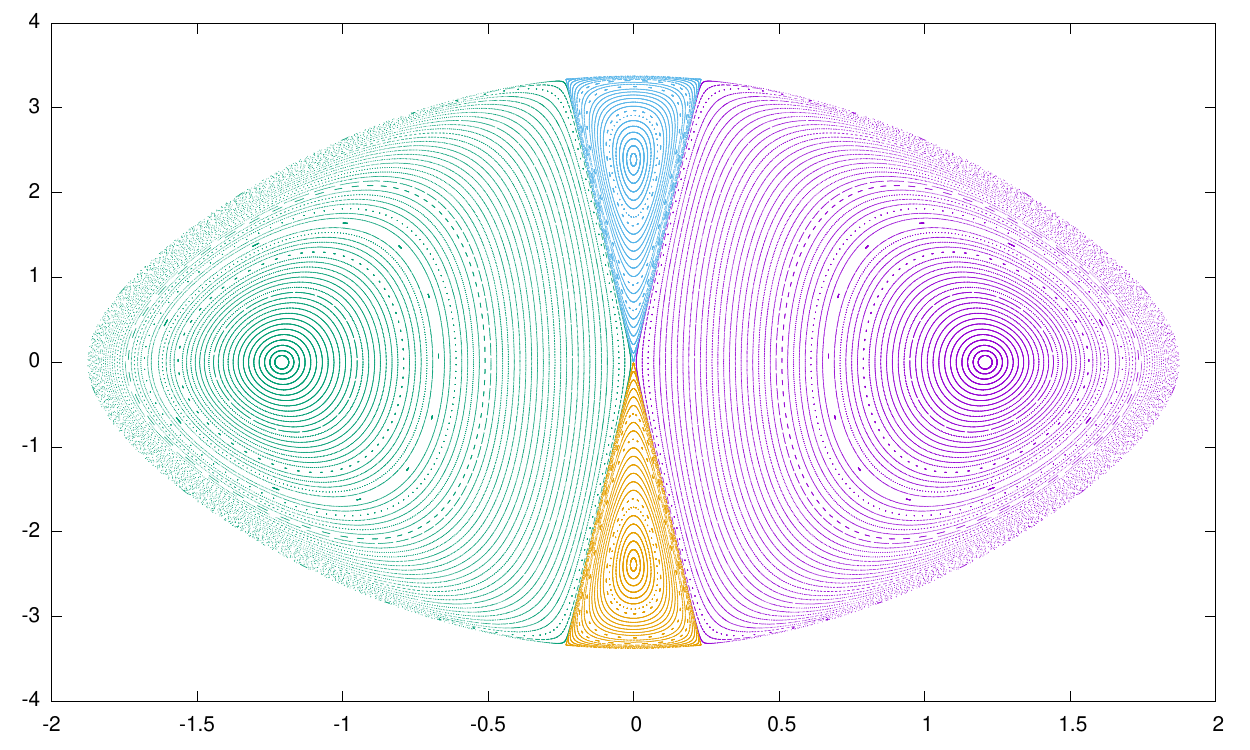}
\caption{Poincar\'e section $(x,p_x)$:  The top figure shows the integrable case  $b=0$, $a=1$, $\mu=5$ and $h=5.7$, while 
in the bottom figure is showing  the same values of the parameters as last one with the exception of  $b= 0.01$. Note the wedge of KAM tori which appears in the vertical direction.}
\label{fig:1}
\end{figure}

\begin{figure}[ht]
\begin{center} 
\includegraphics[scale=0.85]{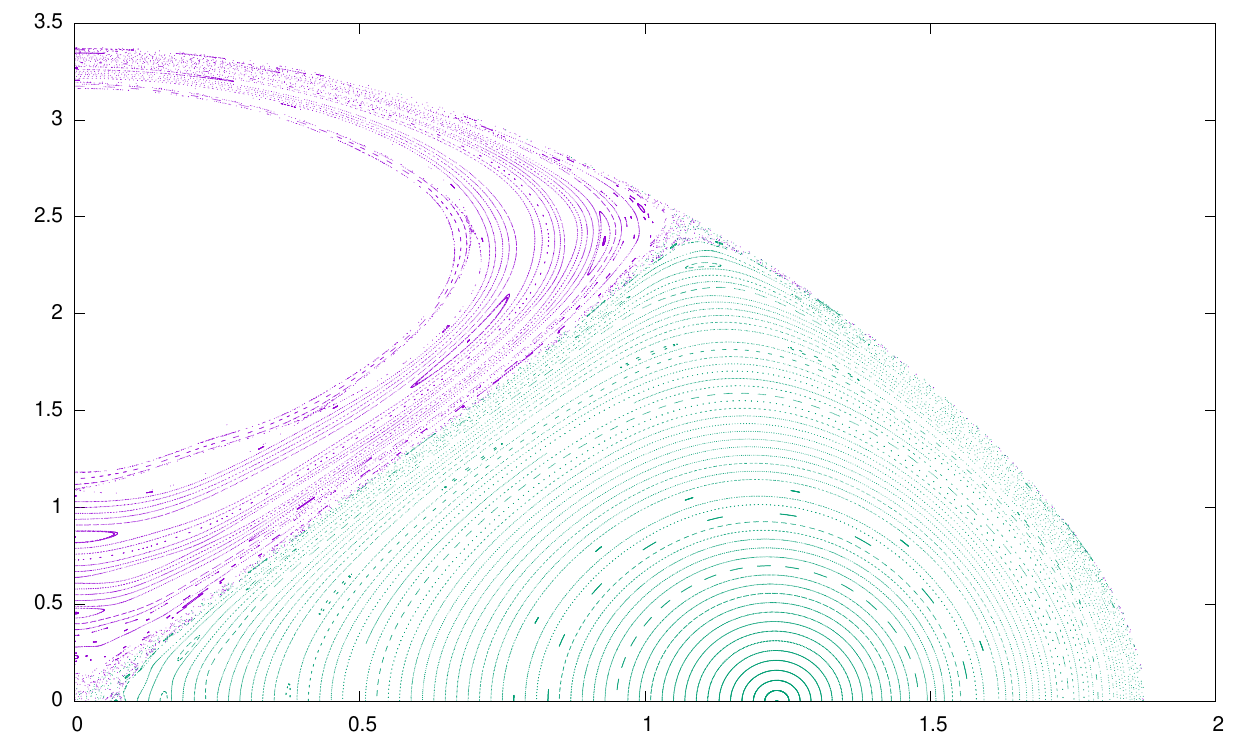}
\includegraphics[scale=0.85]{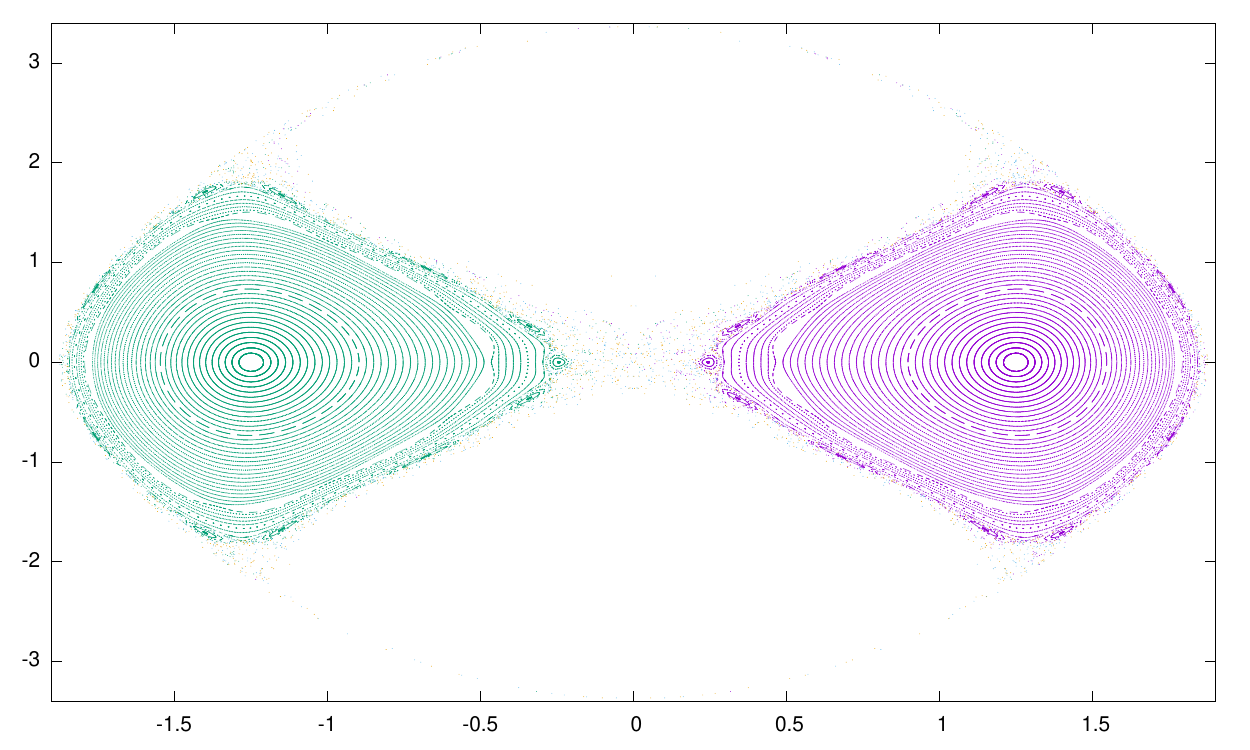}
\end{center}
\caption{Poincar\'e section $(x,p_x)$:  $b=0.3$ and remaining parameters as in Figure \ref{fig:1} (top); The vertical tori break through scape in the open phase space; we reach  $b=0.5$ (bottom), and now we have the same conditions as Armbruster et. al \cite{agk} (Figure 5); the horizontal empty region show by Armbruster et. al \cite{agk} is filled with remnant tori.}
\label{fig:2}      
\end{figure}

\begin{figure}
\begin{center}
	Pictures available on the original file
\end{center}
\caption{Poincar\'e section $(x,p_x)$:  the integrable  case $b=2a$ with   $a=1$, $b=2$, $\mu=5$,  $h=2$ (top); Note that the tori have rotated to the vertical axis;  for $b=2.8$ there is a major bifurcation in the vertical direction (middle) and  for  $b=6$ there appears chaos in the escaping trajectories (bottom).}
\label{fig:3}      
\end{figure}

\begin{figure}
\begin{center}
\includegraphics[scale=0.9]{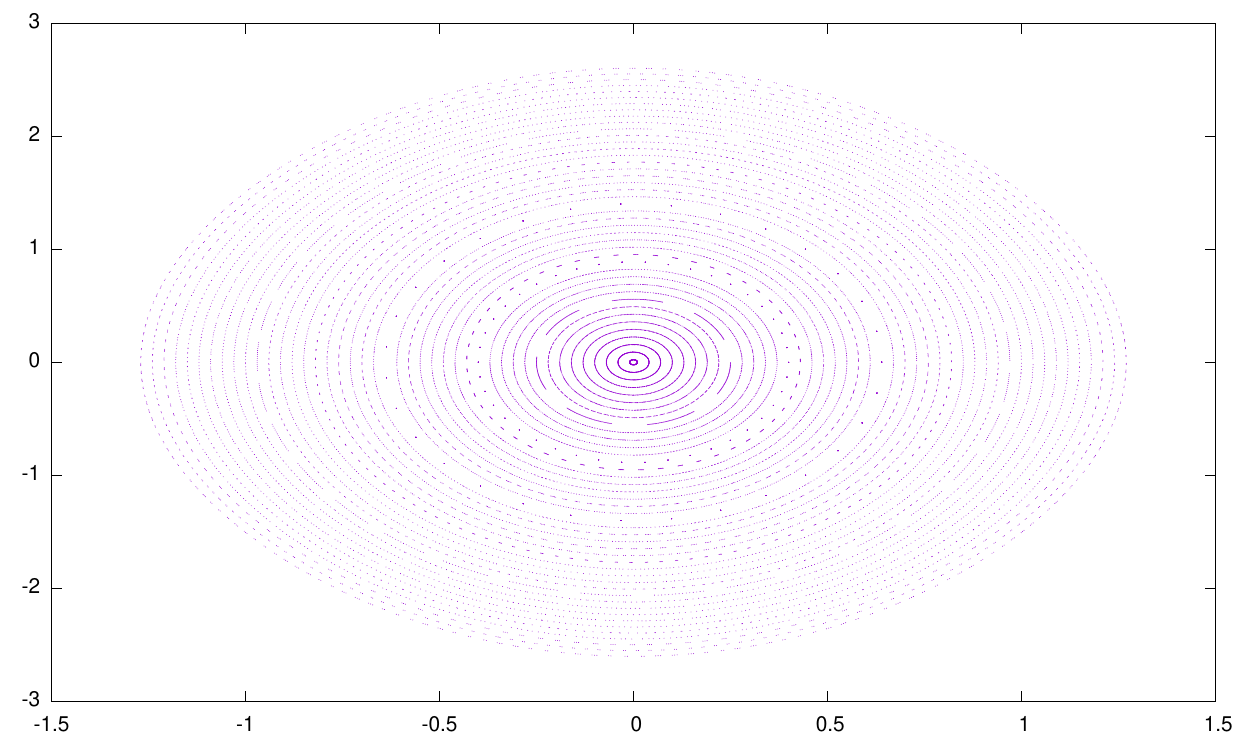}
\includegraphics[scale=0.9]{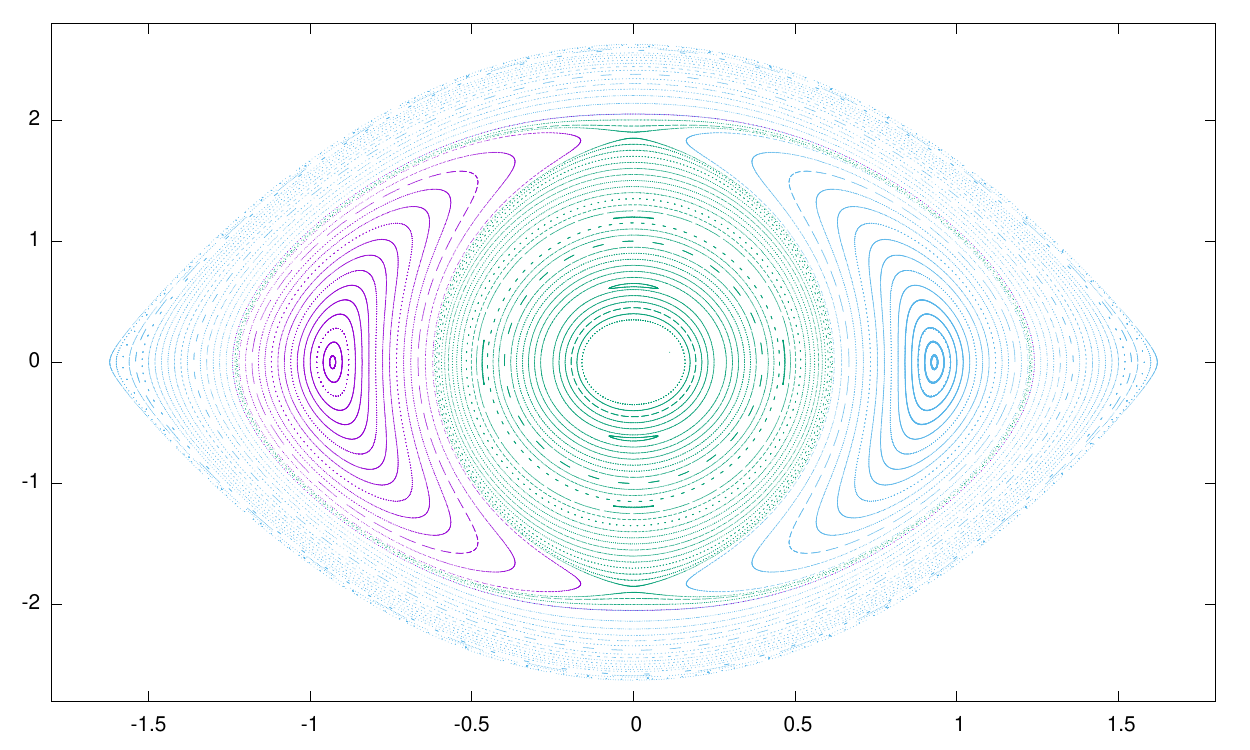}
\includegraphics[scale=0.9]{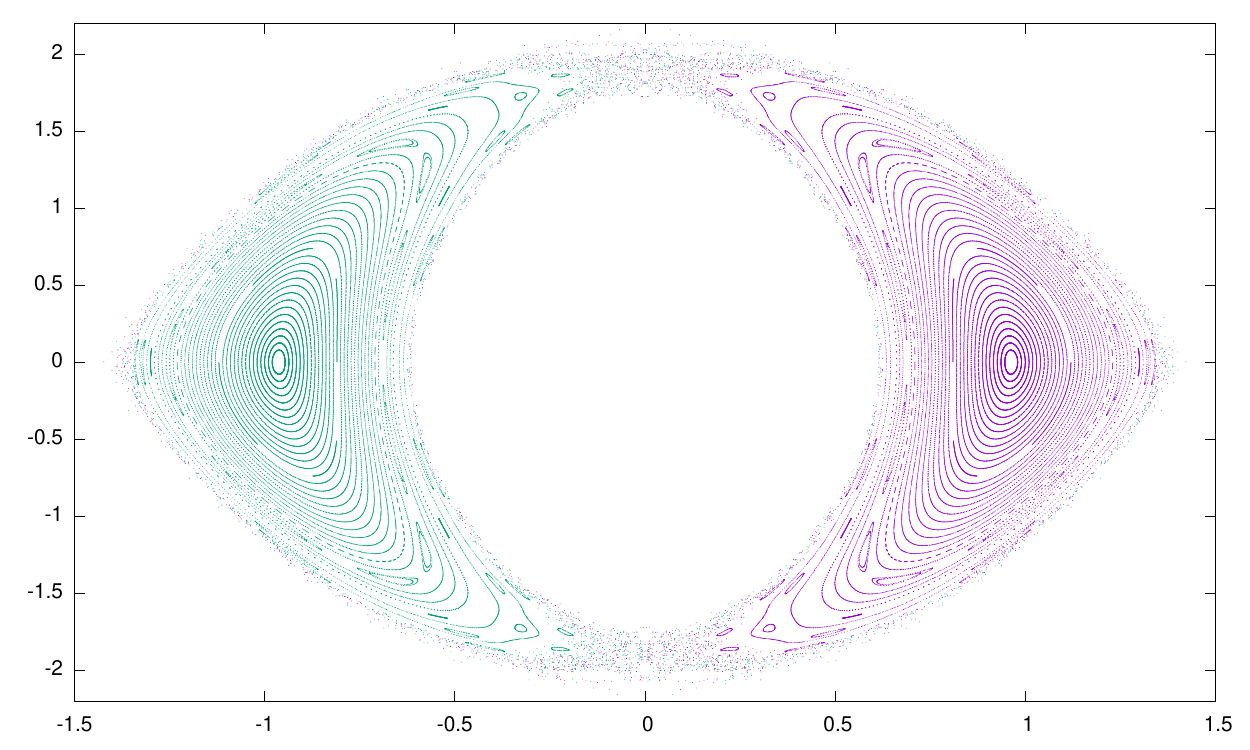}
\end{center}
\caption{ Poincar\'e section $(x,p_x)$: the integrable case $b=-a$ with  $a=1$, $b=-1$, $h=3.5$ and  $\mu=5$ (top) and we can see the symmetrical KAM tori; taking $a$ to $a=1.8$ (middle) there is a horizontal bifurcation  and the central tori start escaping; with $a=2.08$ (bottom) the central part becomes more chaotic and most of the bordering  trajectories escape. }
\label{fig:4}      
\end{figure}

\begin{figure}
\begin{center}
\includegraphics[scale=0.9]{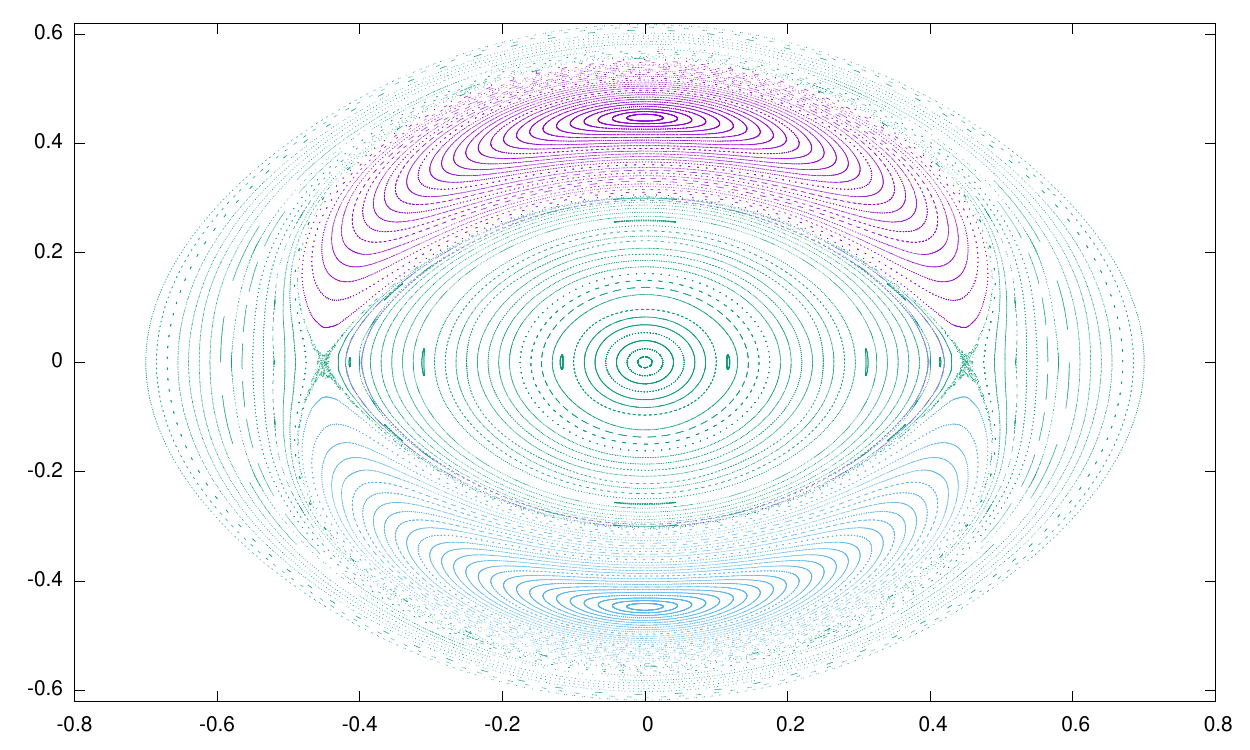}
\includegraphics[scale=0.9]{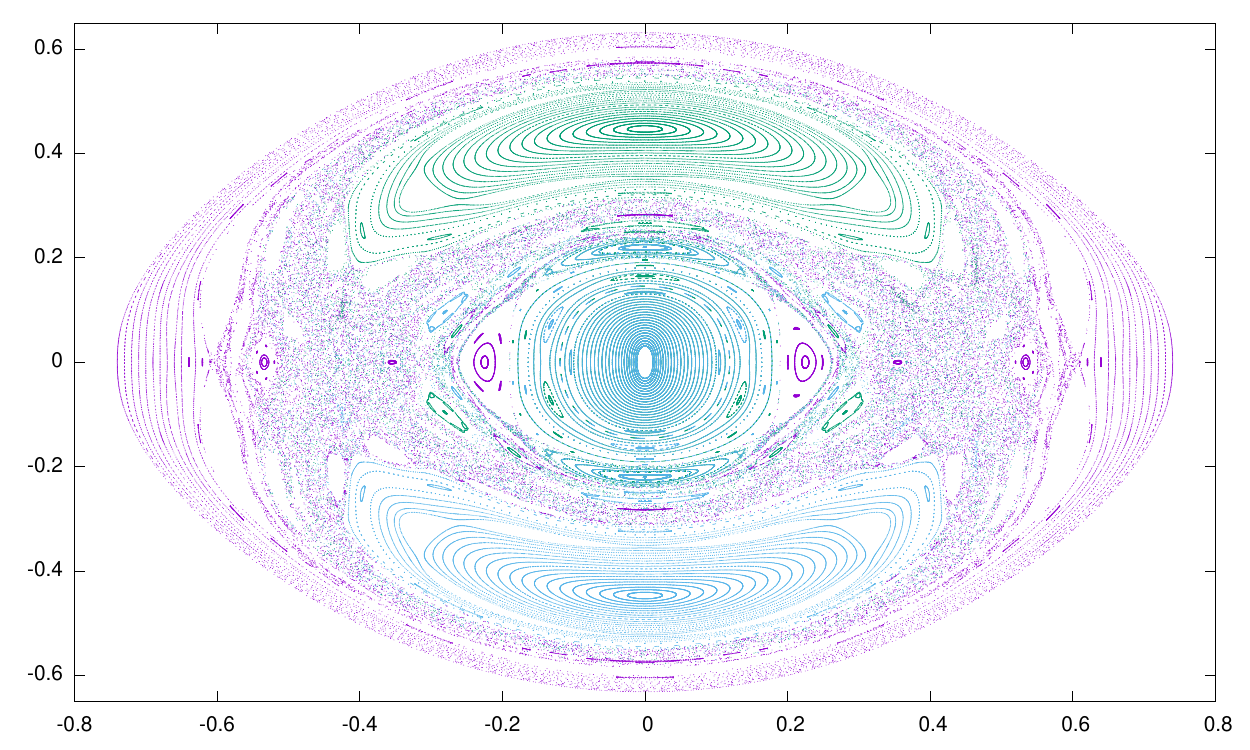}
\end{center}
\caption{Poincar\'e section $(x,p_x)$  with $h=0.2$,  $\mu=1$ starting at the integrable case $a=1$, $b=-1$. The integrable section is symmetrical as top Figure \ref{fig:4}   with $a=1$, $b=-2.5$ (top) there is a vertical bifurcation and  with $b=-5$ (bottom) the system is quite chaotic  but without escape trajectories as the case with $\mu=5$.}
\label{fig:5}      
\end{figure}

\begin{figure}
\begin{center}
\includegraphics[scale=0.9]{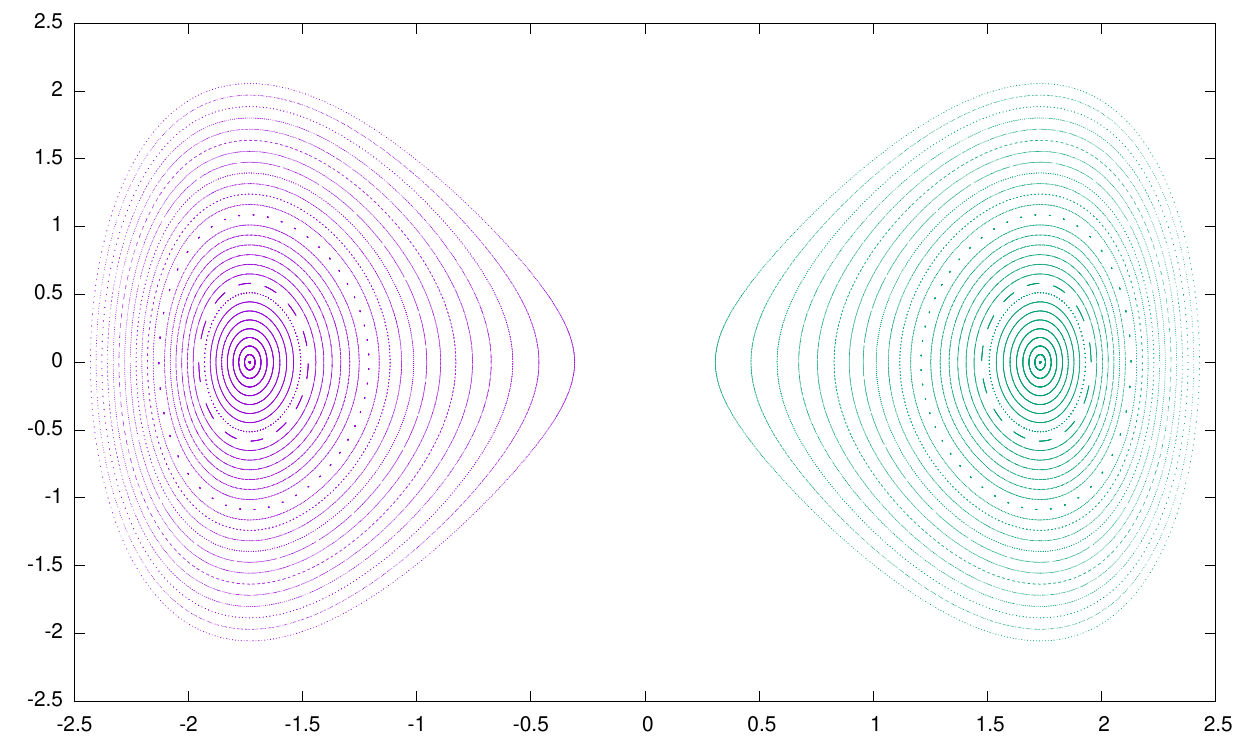}
\includegraphics[scale=0.9]{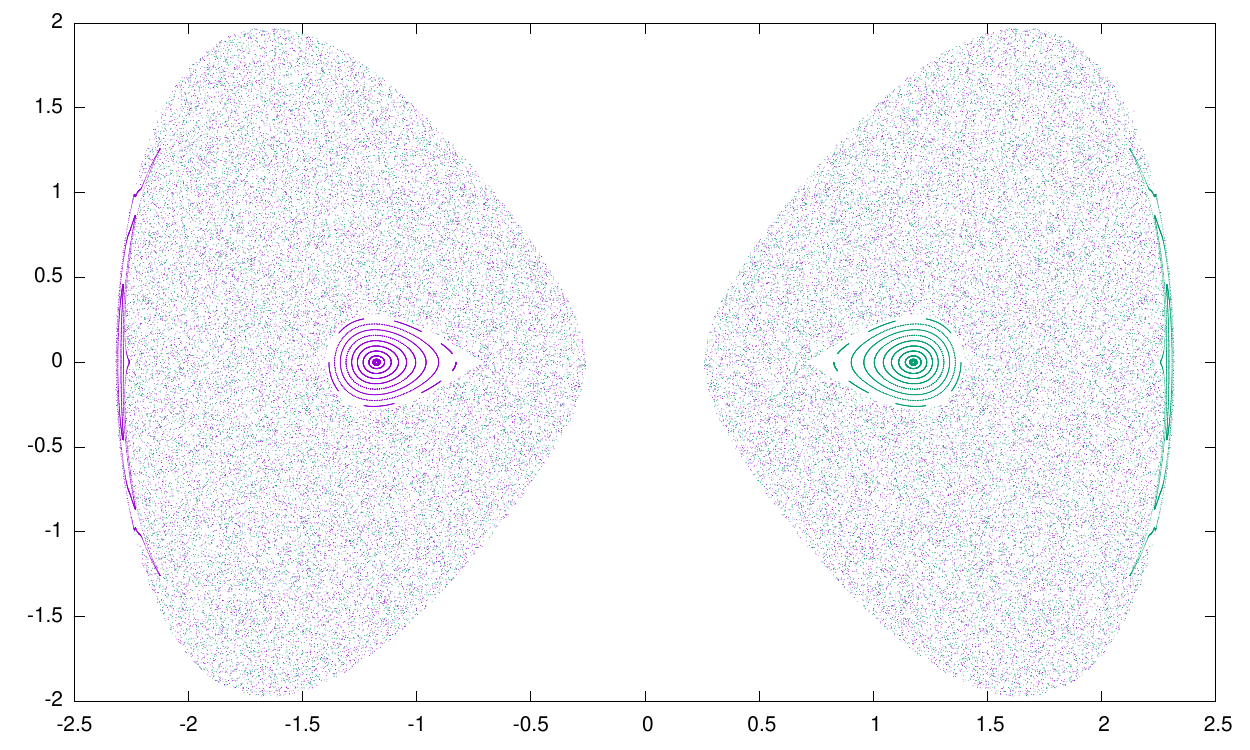}
\end{center}
\caption{Poincar\'e section $(x,p_x)$  with $h=-0.1$,  $\mu=-3$ starting at the integrable case $a=-1$, $b=1$ and the second  $a=-1.1$, $b=1$. Note how rapidly chaos appears with a small change in the $a$ parameter.}
\label{fig:6}      
\end{figure}

\section*{Acknowledgments}
We thank the anonymous referees for their  valuable comments
and suggestions helped us to improve this  paper.
We also thank  Thierry Combot for valuable discussions and providing his Maple code 
for testing homogeneous  potential and Maria Przybylska  who pointed out  us some papers on the integrability conditions for non-homogeneous potentials.
P. Acosta-Hum\'anez wishes to thank the Department of Mathematics, UAM-Iztapalapa, Mexico, where some of this work was carried out, and also to Universidad Sim\'on Bolivar (Barranquilla, Colombia) for its support in the final stage of this research.  M. Alvarez-Ram\'{\i}rez  is partially supported by the grant: Red de cuerpos acad\'emicos Ecuaciones Diferenciales. Proyecto sistemas  din\'amicos y estabilizaci\'on. PROMEP 2011-SEP, Mexico.

\bibliographystyle{siamplain}
\bibliography{refere}

\end{document}